\documentclass[a4paper,11pt]{amsart}

\hoffset         -12mm
\voffset          -10mm
\textwidth       150mm
\textheight      230mm

\usepackage{amsmath,amsfonts,amssymb,amsthm}
\usepackage{latexsym,bm,graphicx}

\usepackage{color}
\title[Ricci Curvature]{On a New Definition of  Ricci Curvature on Alexandrov Spaces}
\author{Hui-Chun Zhang}
\address{Department of Mathematics\\  Sun Yat-sen University\\ Guangzhou 510275\\ E-mail address: zhhuich@mail2.sysu.edu.cn}
\author{ Xi-Ping Zhu}
\address{Department of Mathematics\\  Sun Yat-sen University\\ Guangzhou 510275\\ E-mail address: stszxp@mail.sysu.edu.cn}
\newtheorem{thm}{Theorem}[section]
\newtheorem{prop}[thm]{Proposition}
\newtheorem{lem}[thm]{Lemma}

\newtheorem{cor}[thm]{Corollary}

\newtheorem{conj}[thm]{Conjecture}
\newtheorem{prob}[thm]{Open Problem}

\theoremstyle{definition}
\newtheorem{defn}[thm]{Definition}

\theoremstyle{remark}

\numberwithin{equation}{section}

\newcommand{\ls}{\leqslant}
\newcommand{\gs}{\geqslant}
\newcommand{\wa}{\widetilde\angle}
\newcommand{\wt}{\widetilde}

\newcommand{\ka}{\kappa}

\newcommand{\R}{\mathbb{R}}

\newcommand{\ip}[2]{\left<{#1},{#2}\right>}

\newcommand{\be}[2]{\begin{#1}{#2}\end{#1}}

\begin{document}

\maketitle
\begin{abstract}  Recently, in \cite {ZZ}, a new definition for lower Ricci curvature bounds on Alexandrov spaces was introduced by the authors.  In this article, we  extend our research to summarize the  geometric and analytic results under this Ricci condition. In particular, two new results, the rigidity result of Bishop-Gromov volume comparison and Lipschitz continuity of heat kernel,  are obtained.
\end{abstract}
\tableofcontents
\setcounter{tocdepth}{1}
\section{Introduction}
A complete metric space $(X,|\cdot,\cdot|)$ is called to be a geodesic space if, for any two points $p,q\in X$, the distance $|pq|$ is realized as the length of a rectifiable curve connecting $p$ and $q$. Such distance-realizing curves, parameterized by arc-length, are called (minimal) geodesics.

A geodesic space $(X,|\cdot,\cdot|)$ is said to have curvature $\gs k$ in an open set
$U\subset X$ if
for  each quadruple $(p;a, b,c)\subset U $,  \be{equation}{\wa_k apb+\wa_k bpc+\wa_k cpa\ls 2\pi,} where $\wa_k apb,
\wa_k bpc$ and $\wa_k cpa$ are the comparison angles in the
$k-$plane. That is,  $\wa_k apb$ is the angle at $\bar p$ of a triangle $\triangle\bar a\bar p\bar b$ with side lengths $|\bar a\bar p|=|ap|$, $|\bar p \bar b|=|pb|$  and  $|\bar a\bar b|=|ab|$ in the
$k-$plane. See \cite{BGP} for others equivalent definitions for curvature $\gs \ka$.

 A geodesic space $X$ is  called to be an\emph{ Alexandrov space
with curvature bounded from below locally} (for short, we say $X$ to
be an \emph{Alexandrov space}), if it is locally compact and any
point $p\in M$ has an open neighborhood $U_p\ni p$ and a number $k_{p}\in \mathbb{R}$ such that $X$ has
curvature  $\gs k_{p}$ in $U_p$.  We say that $X$ has curvature $\gs k$ if the previous statement holds  with $k_{p}=k$ for all $p$. It was proved in \cite{BGP}  that $X$ having curvature $\gs k$ implies that (1.1) holds for all quadruples $(p;a, b,c)$ in $X$.

Basic examples of Alexandrov spaces are listed as follows:\\
\indent (1)\indent Riemanian manifolds without or with boundary. A Riemanian manifold has curvature $\gs k$ in an open convex set $U$ if and only if  its sectional curvature $\gs k$ in $U.$\\
\indent (2)\indent Convex polyhedra. The boundary of a convex body (compact convex
set with nonempty interior) in Euclidean spaces has curvature $\gs0$.\\
\indent (3)\indent Let $M$ and $N$ be two Alexandrov spaces. Then the direct product space $M\times N$ is an Alexandrov space.\\
\indent (4)\indent Let $M$ be an Alexandrov space and let the group $G$ act isometrically on $M$, (not necessarily acting free). Then the quotient  space $\overline{M/G}$ is an Alexandrov space.\\
\indent (5)\indent
Let $X$ be a complete metric space of diameter $\leq \pi$. The suspension and cone over $X$ are defined as follows.\\
 \indent\indent (i)\indent The \emph{suspension} over $X$  is the quotient space
 $\ S(X)=X\times [0,\pi]/\sim$, where $(x_1,a_1)\sim (x_2,a_2)\Leftrightarrow a_1=a_2=0 \ {\rm or}\ a_1=a_2=\pi$ with the
 canonical metric
$$\cos|\bar x_1\bar
 x_2|=\cos a_1\cos a_2+\sin a_1\sin a_2\cos|x_1x_2|_X,$$where $\bar
 x_1=(x_1,a_1),\ \bar
 x_2=(x_2,a_2).$\\
\indent\indent (ii)\indent The \emph{cone} over $X$ is the quotient space
 $\ C(X)=X\times [0,\infty)/\sim$, where $(x_1,a_1)\sim (x_2,a_2)\Leftrightarrow a_1=a_2=0.$  The metric of the cone is defined from the
 cosine formula,$$|\bar x_1\bar
 x_2|^2=a_1^2+a_2^2-2a_1a_2\cos|x_1x_2|_X,$$where $\bar
 x_1=(x_1,a_1),\ \bar
 x_2=(x_2,a_2).$\\
 \indent If $X$ is an Alexandrov space with curvature $\gs 1$, then the \emph{suspension} over $X$ has curvature $\gs1$, and
the \emph{cone} over $X$ has curvature $\gs 0$.

 The seminal paper \cite{BGP} and the 10th chapter
in the text book \cite{BBI} provide excellent introductions to Alexandrov geometry.\\

One of the major concepts in Riemannian geometry is ``curvature'', including ``sectional curvature'' and ``Ricci curvature''.
Alexandrov spaces admit the notion of ``lower bounds of sectional curvature''.
 However,  many fundamental results in Riemannian geometry, such as Bonnet-Myers' theorem,  Bishop-Gromov relative volume comparison theorem, Cheeger-Gromoll splitting theorem, Cheng's maximal diameter theorem and Li-Yau's gradient estimates, are established on  \emph{Ricci} curvature.
Thus, a natural question is to give a notion of ``lower bounds of Ricci curvature'' for Alexandrov spaces.  Such a generalization should  satisfy the following properties:\\
\indent (1)\indent it reduces to the usual one for smooth Riemannian manifolds;\\
\indent (2)\indent it admits interesting geometric results on Alexandrov spaces with ``Ricci curvature bounded below''.

In the last few years,  several notions for the ``Ricci curvature bounded below'' on general metric spaces appeared.  Sturm \cite{S-acta} and Lott-Villani  \cite{LV-ann, LV-jfa}, independently, introduced  a definition of ``Ricci curvature bounded lower" for a metric measure space $(X,d,m)$\footnote{A metric measure space  $(X,d,m)$ is a metric space $(X,d)$ equipped a Borel measure $m$.}, by utilizing convexity of some functionals  on the associated  $L^2-$Wasserstein spaces (the space of all probability measures on $X$ with finite second moment). They call it the curvature-dimension condition, denoted by $CD(n,k)$ with  $n\in(1,\infty]$ and $k\in\mathbb{R}.$ Meanwhile, Sturm in \cite{S-acta} and Ohta in \cite{O-mcp} introduced another definition
of ``Ricci curvature bounded below'' for metric measure spaces, the measure
contraction property $MCP(n,k)$, which is a slight modification of a property introduced earlier by Sturm in \cite{S-diff} and in a similar form by Kuwae and
Shioya in \cite{KS-mcp, KS-sob}. The condition $MCP(n,k)$ is indeed an infinitesimal
version of the Bishop-Gromov relative volume comparison condition. In an $n-$dimensional Riemannian manifold $M$, both $CD(n,k)$ and $MCP(n,k)$ are equivalent to the usual condition of Ricci curvature  $\gs k$.

It is obvious that both $CD(n,k)$ and $MCP(n,k)$ make sense for Alexandrov spaces associated to their Hausdorff measures. Moreover, we know from \cite{S-acta} that $CD(n,k)$ implies $MCP(n,k)$ in Alexandrov spaces, due to  non-branching property of  Alexandrov spaces. On $n-$dimensional Alexandrov spaces, Kuwae and Shioya in \cite{KS-lap} introduced another infinitesimal
version of the Bishop-Gromov  volume comparison condition (see also \cite{KS-inf}), denoted by $BG(k)$. Indeed, $MCP(n,k)$ is equivalent to $BG(k)$ on an $n-$dimensional Alexandrov space (see  for example \cite{O-mcp}).

As mentioned before, one expects that  a good definition of ``Ricci curvature bounded below'' on Alexandrov spaces should allow  as many geometric
consequences as manifold case. Note that definition of $MCP(n,k)$ (or $BG(k)$) is an infinitesimal version of Bishop-Gromov relative volume comparison and the condition $CD(n,k)$ implies $MCP(n,k)$ in Alexandrov spaces. Thus, there holds Bishop-Gromov relative volume comparison theorem under $CD(n,k)$ or $MCP(n,k)$ (or $BG(k)$). It was shown in \cite{S-acta}, \cite{LV-ann} and \cite{O-mcp} that Bonnet-Myers' theorem also holds under $CD(n,k)$ or $MCP(n,k)$ (or $BG(k)$). But, up to now, our knowledge on the geometric consequences under these Ricci conditions is still very limit.

 Note that every finitely dimensional norm space ($V^n, \|\cdot\|_p$) satisfies $CD(n,0)$
for $p>1$ (see, for example, page 892 in [V]). More generally,  curvature-dimension condition on a Finsler
manifolds is equivalent to the Finsler Ricci curvature condition (see Ohta \cite{O-finsler}). Clearly, one does not expect a type of Cheeger-Gromoll splitting theorem on Finsler manifolds.  Therefore, Cheeger-Gromoll splitting theorem  is generally not true under $CD(n,0)$ for general metric measure spaces. In \cite{KS-lap,KS-wei}, Kuwae and Shioya established a topological version of Cheeger-Gromoll splitting theorem on Alexandrov spaces under $BG(0)$.

 In \cite{ZZ}, the authors introduced a new definition for lower bounds of Ricci curvature on Alexandrov spaces. We have shown in \cite{ZZ} that the new definition implies the curvature-dimension condition and there hold Cheeger-Gromoll splitting theorem and maximal diameter theorem on Alexandrov spaces under the new notion of Ricci curvature. In this paper, we extend our research to summarize the geometric and analytic results under this Ricci condition. In particular,  two new results, the rigidity result of Bishop-Gromov volume comparison (see Theorem 3.7) and Lipschitz continuity of heat kernel (see Theorem 5.14),  are obtained.

{\bf Acknowledgements}\ \    We are grateful  to Dr. Qintao Deng for
helpful discussions. We are also like to thank Professor T. Shioya
for his helpful comments. The second author is partially supported
by NSFC 10831008 and NKBRPC 2006CB805905.

\section{Definitions of Ricci curvature}

\subsection{Ricci curvature on smooth  manifolds} To illustrate the idea of our definition of lower Ricci curvature bounds on Alexandrov spaces,  we recall  some equivalent conditions for   Ricci curvature  on smooth Riemannian manifolds.

Let $M^n$ be an $n-$dimensional Riemannian manifold and  let $R$ be
the Riemannian curvature tensor. Fix a shortest geodesic
$\gamma(t)$, $t\in(-\epsilon,\epsilon),$ and an othonormal basis
$\{e_1, e_2,\cdots,e_n =\gamma'(0) \}$ at $ p =\gamma(0)$. We extend
them to  an orthonormal frame $\{e_1(t), e_2(t),\cdots,e_n(t) \}$ on
$\gamma(t)$ by parallel
translation. The sectional curvature on  $2-$plane $P_{ij}\subset T_pM^n$, spanned by vectors $e_i$ and $e_j$, is defined by $$sec(P_{ij})=R(e_i,e_j,e_j,e_i).$$   Fix $t_0\in(-\epsilon,\epsilon),$ and let $P_{in}(t_0)$ be the $2-$plane in $T_{\gamma(t_0)}M^n$ spanned by vectors $\gamma'(t_0)$ and $e_i(t_0)$. Then, by the second variation formula of arc-length, the condition $sec(P_{in}(t_0))\gs \ka_i(t_0)$ (for some real number $\ka_i(t_0)$) is equivalent to the following geometric property:\\
\indent for $x=\gamma(t_0)$ and any $\delta>0$, there exists $\eta_0>0$ with $(t_0-\eta_0,t_0+\eta_0) \subset(-\epsilon,\epsilon)$ such that for any $y= \gamma(t')$ with $t'\in (t_0-\eta_0,t_0+\eta_0)$,
\be{equation}{\be{split}{d\big(\exp_x(\varepsilon ae_i(t_0)),&\exp_y(\varepsilon b e_i(t'))\big)\ls d(x,y)\\ &+\Big(\frac{(b-a)^2}{2\cdot d(x,y)}+\frac{(\ka_i(t_0)+\delta) \cdot d(x,y)}{6}(a^2+ab+b^2)\Big)\cdot \varepsilon^2+o(\varepsilon^2)}}
as $\varepsilon\to0^+,$ for all $a,b\gs0.$

The Ricci curvature $\gs k$ at $x=\gamma(t_0)$ is equivalent to having $\ka_1(t_0),\  \ka_2(t_0),\cdots, \ka_{n-1}(t_0)$ in (2.1)  with  \begin{equation}\ka_1(t_0)+\ka_2(t_0)+\cdots+\ka_{n-1}(t_0)\gs k,\end{equation}for all geodesics $\gamma$ passing through $x$.

One can also characterize the condition of Ricci curvature bounded below via the Jacobian fields along geodesics. To see this, let $\phi(x)$ be a $C^3$ function defined in a neighborhood of a given shortest geodesic $\gamma(t): (-\epsilon,\epsilon)\to M^n$, and consider the map $F_t(x):=\exp_x(t\nabla \phi(x))$. Then $Jac(F_t)(x)$ can be described as the determinant of a  matrix $\emph{\textbf{J}}(t)$ which solves the Jacobi equations $$\emph{\textbf{J}}''(t)+\emph{\textbf{RJ}}(t)=0$$
with initial data $\emph{\textbf{J}}(0)=\emph{\textbf{Id}}$ and $\emph{\textbf{J}}'(0)=Hess_p\phi$, where $\emph{\textbf{R}}(t)=R(\gamma'(t),e_i,e_j,\gamma'(t))$.
Setting $U(t)=\emph{\textbf{J}}'\cdot \emph{\textbf{J}}^{-1}$ and $\mathcal{J}={\rm det}\emph{\textbf{J}}$, we have \be{equation}{\frac{d}{dt}{\rm tr} U+{\rm tr} U^2+Ric(\gamma',\gamma')=0
} and, by Cauchy-Schwarz inequality,
\be{equation}
{\frac{d^2}{dt^2}\log \mathcal{J}+\frac{1}{n}\Big(\frac{d}{dt}\log \mathcal{J}\Big)^2+Ric(\gamma',\gamma')\ls0.
}

Denote by  $\mathcal P_2(M^n,d_W,{\rm vol})$  the subset of
$L^2-$Wasserstein space which consists of absolutely continuous
probability measures with respect to {\rm vol}. $M^n$ is said to
possess displacement $k-$convexity if the functional (or called
entropy) \be{equation*} {Ent(\mu)=\int_{M^n}
\frac{d\mu}{dx}\cdot\log\frac{d\mu}{dx} d{\rm vol}(x)
 } is $k-$convex in  $\mathcal P_2(M^n,d_W,{\rm vol})$.

By integrating equation (2.4), Cordero-Erausquin, McCann and
Schmuckenschl\"{a}ger in \cite{CMS} proved that Riemannian manifolds
with nonnegative Ricci curvature possess displacement $0-$convexity.
In particular, the displacement $0-$convexity implies a generalized
Brunn-Minkowski inequality which states that the function $$t\to
{\rm vol}^{1/n}(A_t)\qquad t\in[0,1]$$is concave, where measurable
sets $A_t$ are defined by
$$A_t=\{x_t\in M: \exists\ x_0\in A_0, x_1\in A_1\ {\rm such\ that}\
|x_0x_t|=t|x_0x_1|,\ |x_tx_1|=(1-t)|x_0x_1|\}.$$  Later in
\cite{RS}, this displacement convexity  was extended by von Renesse
and Sturm to displacement $k-$convexity for Riemannian manifolds
with  Ricci curvature bounded below by $k$. In fact, the
displacement $k-$convexity gives an equivalent definition for Ricci
curvature bounded below by $k$ on Riemannian manifolds.

If we denote $A_p(r,\xi)$ the density of the Riemannian measure on
$\partial B_p(r)$ induced from the Riemannian metric on $M^n$, then
by the classical Bishop comparison theorem (see, for example
\cite{Chavel}), the condition $Ric(M^n)\gs k$ implies that the
function
$$\frac{A_p(r,\xi)}{(s_k(r/\sqrt{n-1}))^{n-1}}$$ is non-increasing
in $(0,c(\xi))$ for all $\xi\in \Sigma_p$, where  $s_k(t)$ is the
solution of $\chi''(t)+k\cdot\chi(t)=0$ with  $\chi(0)=1,\chi'(0)=1$
and $$c(\xi):=\sup\{t>0|\ |p\exp_p(t\xi)|=t\}.$$ On the other hand,
given a direction $\xi\in \Sigma_p$, there holds
$$\frac{A_p(r,\xi)}{A_p(2r,\xi)}=\frac{1}{2^{n-1}}\cdot\big(1+Ric(\xi,\xi)\cdot
r^2\big)+O(r^3).$$  Then it is no hard to show that the inequality
$$\frac{A_p(r,\xi)}{A_p(2r,\xi)}\gs \Big(\frac{s_k(r/\sqrt{n-1})}{s_k(2r/\sqrt{n-1})}\Big)^{n-1}$$ implies the condition $Ric(\xi,\xi)\gs k$. Therefore, for an $n-$dimensional Riemannian manifold $M^n$, its Ricci curvature bounded below by $k$  if and only if for all $p\in M^n$, the function $A_p(r,\xi)/(s_k(r/\sqrt{n-1}))^{n-1}$ is non-increasing in $(0,c(\xi))$ for all $\xi\in \Sigma_p$.

We can now summarize the equivalent conditions of Ricci curvature bounded below in the following proposition.
\begin{prop}On an $n-$dimensional Riemannian manifold $M^n$, the following five conditions  are  equivalent:\\
\indent{\rm (i)}\ $Ric(M^n)\gs k$;\\
\indent{\rm (ii)}\ Bochner formula: for each $C^3$ function $f$,
\be{equation*}{\frac{1}{2}\Delta|\nabla f|^2\gs|Hess f|^2+\ip{\nabla f}{\nabla\Delta f}+k|\nabla f|^2\gs \frac{(\Delta f)^2}{n}+\ip{\nabla f}{\nabla\Delta f}+k|\nabla f|^2;
}
\indent{\rm (iii)}\ displacement $k-$conexity $($see \cite{RS}$)$;\\
\indent{\rm (iv)}\ Bishop comparison property $($see, for example \cite{Chavel}$)$:  for all $p\in M^n$, the function $$\frac{A_p(r,\xi)}{(s_k(r/\sqrt{n-1}))^{n-1}}$$ is non-increasing in $(0,c(\xi))$ for all   $\xi\in \Sigma_p$;\\
\indent{\rm (v)}\ parallel transportation explanation by $(2.1)$ and $(2.2)$.
\end{prop}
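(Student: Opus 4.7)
The strategy is to prove Proposition 2.1 by establishing (i)~$\Leftrightarrow$~(X) for each X in $\{$(ii), (iii), (iv), (v)$\}$, treating (i) as the hub. Three of these equivalences already have clean references in the literature, so the task largely reduces to verifying each direction by a standard Riemannian computation or by quoting the cited result.

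For (i)~$\Leftrightarrow$~(ii), I would begin with Bochner's identity
\[
\tfrac{1}{2}\Delta|\nabla f|^2 = |Hess f|^2 + \ip{\nabla f}{\nabla\Delta f} + Ric(\nabla f,\nabla f),
\]
a pointwise tensorial calculation. The first inequality of (ii) is then equivalent to $Ric(\nabla f,\nabla f) \gs k|\nabla f|^2$; choosing $f$ with prescribed gradient $v$ at a given point (for instance an affine function in normal coordinates) recovers $Ric \gs k$. The second inequality is the pointwise Cauchy--Schwarz bound $|Hess f|^2 \gs (\Delta f)^2/n$. For (i)~$\Leftrightarrow$~(v), the second variation formula for arc-length expresses the $O(\varepsilon^2)$ coefficient in (2.1) exactly via $sec(P_{in}(t_0))$, so (2.1) is equivalent to $sec(P_{in}(t_0)) \gs \kappa_i(t_0)$; summing in an orthonormal frame and using $Ric(\gamma',\gamma') = \sum_{i=1}^{n-1} sec(P_{in})$ gives (2.2)~$\Rightarrow$~$Ric \gs k$, while the converse is obtained by setting $\kappa_i(t_0) := sec(P_{in}(t_0))$.

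For (i)~$\Leftrightarrow$~(iv), the forward direction is the classical Bishop volume comparison (see \cite{Chavel}): under $Ric \gs k$, differentiating $\log A_p(r,\xi)$ along the radial geodesic and invoking the Raychaudhuri identity (2.3) in an adapted frame produces monotonicity of $A_p(r,\xi)/s_k(r/\sqrt{n-1})^{n-1}$ out to the cut locus. For the converse, as the excerpt already indicates, one Taylor-expands $A_p(r,\xi)/A_p(2r,\xi)$ about $r=0$ and compares with the corresponding expansion of $(s_k(r/\sqrt{n-1})/s_k(2r/\sqrt{n-1}))^{n-1}$ to read off $Ric(\xi,\xi) \gs k$ in every direction $\xi$.

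Finally, (i)~$\Leftrightarrow$~(iii) is the theorem of von Renesse--Sturm~\cite{RS}. For the forward direction one uses Brenier's theorem to represent a Wasserstein geodesic $\mu_t$ as the push-forward of $\mu_0$ under $F_t(x) = \exp_x(t\nabla\phi(x))$; then (2.4) together with $Ric \gs k$ shows $\log\det\emph{\textbf{J}}(t)$ is $k$-concave in $t$, and integrating against $\mu_0$ delivers $k$-convexity of $Ent$. The converse tests $k$-convexity along Wasserstein geodesics of measures concentrating near a chosen point and direction to recover the pointwise Ricci bound. This is the step I expect to be the main obstacle, since it genuinely relies on optimal-transport technology (absolute continuity of displacement interpolants, the Brenier--McCann representation, and the Jacobian inequality derived from (2.4)), whereas the remaining three equivalences reduce to essentially standard Riemannian calculations.
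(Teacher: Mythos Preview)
Your proposal is correct and matches the paper's own treatment. The paper does not give a separate formal proof of Proposition 2.1; instead, the discussion preceding the proposition serves as the argument, and it proceeds exactly as you do---treating (i) as the hub, deriving (v) from the second variation formula (equations (2.1)--(2.2)), (iii) from the Jacobi/Riccati inequality (2.4) together with the von Renesse--Sturm result \cite{RS}, and (iv) via the Bishop comparison plus the Taylor expansion of $A_p(r,\xi)/A_p(2r,\xi)$, while (ii) is the standard Bochner identity left implicit.
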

\subsection{Ricci curvature bounded below on singular spaces} Each equivalent condition is Proposition 2.1 can be used to define a notion of Ricci curvature
bounded below on suitable singular spaces. In \cite{LY}, Lin and Yau
used the condition (ii) to define the lower bounds for Ricci
curvature on locally finite graphs. In this subsection we will
recall Lott-Sturm-Villani's curvature-dimension condition and
Ohta-Sturm's $MCP$ condition (Kuwae-Shioya's $BG$ condition), which
are associated to the above conditions (iii) and (iv), respectively.

\subsubsection{Curvature-dimension condition $CD(n,k)$}
 Let $(X,d,m)$ be a metric measure space. Let us limit ourselves to the case  that $(X,d)$ is a non-branching complete separable geodesic space\footnote{Lott-Villani and Sturm defined curvature dimension condition on  general metric measure spaces.}.
Denote the $L^2-$Wasserstein space by $\mathcal P_2(X,d_W)$ and its subspace consisting of $m-$absolutely continuous probability measures  by $\mathcal P_2(X,d_W,m)$.

 Set $$U_n(r)=nr(1-r^{-1/n})$$for $N< \infty$ and $U_n(r)=r\ln r$ for $n=\infty.$ Recall that in the above (iii) in Proposition 2.1, the function $U_\infty(r)$ is used to define the  functional $Ent.$

 Given  $k\in\mathbb{R}$, $n\in(1,\infty]$, $t\in[0,1]$ and two points $x,y\in X$, the function $\beta_t^{(k,n)} $ is defined as follows:\\
   \indent (1)\indent If $0<t\ls 1$, then \begin{equation*}\beta_{t}^{(k,n)}(x,y):=
 \begin{cases}\exp\big(\frac{k}{6}(1-t^2)\cdot d^2(x_0,x_1)\big)& {\rm if }\ n=\infty,\\ \infty & {\rm if }\ n<\infty,\ k>0\ \ {\rm and}\  \alpha\gs \pi,\\  \big(\frac{\sin(t\alpha)}{t\sin\alpha}\big)^{n-1}& {\rm if }\ n<\infty,\ k>0\ \ {\rm and}\ \alpha\in[0, \pi),\\
1& {\rm if }\ n<\infty, \ k=0, \\
 \Big(\frac{\sinh(t\alpha)}{t\sinh\alpha}\Big)^{n-1}& {\rm if }\ n<\infty,\ k<0,\end{cases}
 \end{equation*}  where $\alpha=d(x,y)\cdot\sqrt{|k|/(n-1)}$.\\
\indent (2)\indent $\beta_0^{(k,n)}(x,y)=1.$

The curvature-dimension condition $CD(n,k)$ is a kind of convexity for the functionals defined by $U_n$.

\begin{defn}(see 29.8 and 30.32 in \cite{V})\
Given  $k\in\mathbb{R}$ and $n\in(1,\infty]$,  the metric measure space $(X,d,m)$ is said to satisfy the \emph{curvature-dimension condition }$CD(n,k)$ if for
each pair  $\mu_0,\mu_1\in\mathcal P_2(X,d_W,m)$ with compact support and ${\rm Supp}(\mu_i)\subset{\rm Supp}(m)$, $i=0,1$, writing their Lebesgue decompositions
with respect to $m$ as $\mu_0=\varrho_0\cdot m$ and ¦Ì$\mu_1=\varrho_1\cdot m$, respectively, then there exist an optimal coupling $q$ of $\mu_0$ and $\mu_1$,
and a geodesic path\footnote{constant-speed shortest curve defined on $[0,1]$.} $\mu_t: [0, 1]\rightarrow \mathcal P_2(X,d_W)$ connecting $\mu_0$ and $\mu_1$, so that for  all $t\in [0, 1]$, we have
\begin{equation}\begin{split}
H_m(\mu_t)\ls& (1-t)\int_{X\times
X}\frac{\beta^{(k,n)}_{1-t}(x,y)}{\rho_0(x)}\cdot U_n\Big(\frac{\varrho_0(x)}{\beta^{(k,n)}_{1-t}(x,y)}\Big)dq(x,y)\\
&\ +t\int_{X\times
X}\frac{\beta^{(k,n)}_{t}(x,y)}{\rho_1(y)}\cdot U_n\Big(\frac{\varrho_1(y)}{\beta^{(k,n)}_{t}(x,y)}\Big)dq(x,y)
\end{split}\end{equation} where $H_m(\mu): \mathcal P_2(X,d_W)\to\mathbb{R}$ is the functional
$$H_m(\mu):=\int_XU_n(\varrho)dm+\lim_{r\to\infty}\frac{U_n(r)}{r}\cdot\mu_s(X)$$ and $\mu$ has Lebesgue decomposition with respect to $m$ as $\mu=\rho\cdot m+\mu_s$.
\end{defn}

Let $M^n$ be a Riemannian manifold with Riemannian distance $d$ and Riemannian volume ${\rm vol}$.    The equivalence between the    metric measure space $(M^n,d,{\rm vol})$ satisfying $CD(n,k)$ and  the Riemannian manifold $M^n$ having Ricci curvature $\gs k$ is proved by Lott-Villani in \cite{LV-ann,LV-jfa} and von Renesse-Sturm in \cite{RS,
S-convex}.  The idea of  the proof can be described as follows. Take $\mu_0,\mu_1\in\mathcal P_2(M^n,d_W,{\rm vol})$,  there exists a function $\varphi: M^n\to\R$ such that $$\Gamma(t)=(F_t)_*\mu_0$$ forms a geodesic path in $P_2(M^n,d_W)$ connecting $\Gamma(0)=\mu_0$ and $\Gamma(1)=\mu_1$, where $
F_t(x)=\exp_x(-t\nabla \varphi(x))$ for $\mu_0$-a.e. $x\in M^n$ and $t\in[0, 1]$ (see \cite{CMS}). By integrating (2.4),  one can prove that the condition  $Ric\gs k$ implies $CD(n,k).$ Conversely, $CD(n,k)$ implies a Brunn-Minkowksi inequlity, and hence Bishop-Gromov relative volume comparison. Therefore, by Proposition 2.1(iv),  the condition $CD(n,k)$ implies Ricci curvature $\gs k$ on Riemannian manifolds.

\subsubsection{Measure contraction property $MCP(n,k)$ and $BG(k)$}
Let $X$ be a geodesic space. Denote $\Gamma$ by the set of geodesic paths  in $X$ and define the evaluation map $e_t:\Gamma\to X$ by $e_t(\gamma)=\gamma(t)$. A dynamical transference plan $\Pi$ is a Borel probability measure on $\Gamma.$
\be{defn}{(see \cite{O-mcp})  For $n,k\in\mathbb{R}$, $(X,d,m)$ is said to satisfy the condition $MCP(n,k)$ if for any point $x\in X$ and measurable set $A$ with finite positive measure, there exists a geodesic path $\mu_t$ in $\mathcal P_2(X,d_W)$, associated to a dynamical transference plan $\Pi$ , such that $\mu_0=\delta_x$, $\mu_1=[m(A)]^{-1}\cdot m|_A$ and for every $t\in[0,1]$,\be{equation*}{
d\mu_t\gs(e_t)_*\Big(t\Big\{\frac{s_k(t\ell(\gamma))/\sqrt{n-1}}{s_k(\ell(\gamma))/\sqrt{n-1}}\Big\}^{n-1}\cdot m(A)\cdot d\Pi(\gamma)\Big)
}where $\ell(\gamma)$ is the length of $\gamma$.
}
Roughly speaking, $MCP(n,k) $ is the special case of $CD(n,k)$ where $\mu_0$ is degenerated to a Dirac mass in Definition 2.2. We remark that there is another form of this definition  via Markov kernel (see \cite{S-acta}).

For an $n-$dimensional Riemannian manifold $M^n$, Ohta in \cite{O-mcp} proved that $M^n$ satisfies $MCP(n,k)$ if and only if its Ricci curvature $\gs k$. Let us describe his proof as follows. Let $p\in M^n$ and denote $C_p$ to be the cut locus of $p$. Consider the map $\Phi_{p,t}:\ M^n\backslash C_p\to M^n,$ $0<t\ls 1,$ $$\Phi_{p,t}(x):= {\rm the\ point}\ y\ {\rm in \ geodesic\ from\ }p\ {\rm to}\ x\ {\rm such\ that}\ |py|=t|px|.$$
$M^n$ satisfies  $MCP(n,k)$ if and only if  for any $p\in M^n$, the following property holds (see \cite{O-mcp}):\\
\indent for any $  x\in M^n$ and $0<t\ls 1$, \be{equation}{
d(\Phi_{p,t,*}{\rm vol})(x)\gs
t\Big(\frac{s_k(t\ell(\gamma)/\sqrt{n-1})}{s_k(\ell(\gamma)/\sqrt{n-1})}\Big)^{n-1}\cdot
d{\rm vol}(x). } Therefore, $MCP(n,k)$ is indeed an infinitesimal
version of the Bishop-Gromov relative volume comparison condition.
Clearly, the inequality (2.6) implies
$$\frac{A_p(r,\xi)}{A_p(2r,\xi)}\gs
\Big(\frac{s_k(r/\sqrt{n-1})}{s_k(2r/\sqrt{n-1})}\Big)^{n-1}$$ for
all $\xi\in \Sigma_p$, hence $Ric(\xi,\xi)\gs k.$ Conversely, given
any measure set $A\subset M^n$, one has $$\Phi_{p,t,*}{\rm
vol}(A)=\int_{\exp^{-1}_p(A\backslash C_p)}t\cdot
A_p(tr,\xi)drd\xi.$$  Then one gets from that Proposition 2.1 (i)
and (iv) that the condition $Ric(\xi,\xi)\gs k$ implies the
inequality (2.6).

In \cite{KS-lap}, by using inequality  (2.6),  Kuwae and Shioya intruducted an infinitesimal Bishop-Gromov condition, called by $BG(k)$, on Alexandrov spaces. For an $n-$dimensional Alexandrov space with its Hausdorff measure ${\rm vol}$, $BG(k)$ is equivalent to the condition $MCP(n,k)$ (see \cite{O-mcp,KS-lap}).
\subsection{Ricci curvature on Alexandrov spaces}
Let $ M$ be an $n-$dimensional Alexandrov space and  $p\in M$.  $T_p$ and $\Sigma_p$ are the tangent cone and the space of directions. We denote by $C_p$ the \emph{cut locus} to $p$, i.e., the set of points $x\in M$ such that any geodesic from $p$ to $x$, denoted by $\gamma_{px}$, does not extend beyond $x$. It was shown that $C_p$ has $n-$dimensional Hausdorff  measure zero for any $p\in M$ (see \cite{OS-rie}). Denote  by $W_p=M\backslash C_p$. For any $q\in W_p$, the geodesic $\gamma_{pq}$ is unique.

The exponential map $\exp_p: T_p\rightarrow M$ is defined as
follows.  For any  $v\in T_p$,  $\exp_p(v)$ is a point on some
quasi-geodesic (see \cite{Pet-con,PP} for definition of
quasi-geodesic)  starting point $p$ along $v/|v|\in \Sigma_p$ with
$|p\exp_p(v)|=|v|$. Denote by $\log_p:=\exp^{-1}_p.$

 Let $\gamma:\ [0,\ell)\rightarrow M$ be a  geodesic. Without loss of generality, we may assume that a neighborhood  $U_\gamma$ of $\gamma$ has curvature $\gs k_0$ for some $k_0\ls0$.

 From Section 7 in \cite{BGP}, the tangent cone $T_{\gamma(t)}$ at an interior point $\gamma(t) \ (t \in (0,\ell))$ can be split  into a direct  product. We denote \begin{equation*}\begin{split}L_{\gamma(t)}&=\{\xi\in T_{\gamma(t)}\ |\ \ip{\xi}{\gamma^+(t)}=\ip{\xi}{\gamma^-(t)}=0\},\\
\Lambda_{\gamma(t)}&=\{\xi\in \Sigma_{\gamma(t)}\ |\
\ip{\xi}{\gamma^+(t)}=\ip{\xi}{\gamma^-(t)}=0\},
\end{split}\end{equation*}
where $$\gamma^{\pm}(t):=\lim_{h\to0^+}\frac{1}{h}\cdot\log_{\gamma(t)}\gamma(t\pm h).$$
In \cite{Pet-para}, Petrunin proved the following second  variation formula of arc-length.
\begin{prop}$($Petrunin \cite{Pet-para}$)$\indent
Given any two point $q_1,q_2\in\gamma$, which are not end points,
and any  sequence $\{\varepsilon_j\}^\infty_{j=1}$
with $\varepsilon_j\to 0$ and $\varepsilon_j\gs\varepsilon_{j+1}$, there exists a subsequence
$\{\wt\varepsilon_j\}\subset \{\varepsilon_j\}$ and an isometry $T:\
L_{q_1}\rightarrow\ L_{q_2}$ such that
\begin{equation}\begin{split}|\exp_{q_1}(\wt\varepsilon_j u),\ \exp_{q_2}(\wt\varepsilon_j Tv)|\ls & |q_1q_2|+ \frac{|uv|^2}{2|q_1q_2|}\cdot\wt\varepsilon^2_j\\&-\frac{k_0\cdot|q_1q_2|}{6}\cdot\big(|u|^2+|v|^2+\langle u,v\rangle\big)\cdot\wt\varepsilon_j^2+o(\wt\varepsilon_j^2)\end{split}\end{equation}for any $u,v\in L_{q_1}.$
\end{prop}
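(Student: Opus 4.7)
The plan is to prove the inequality by a discretization-and-compactness argument: partition $\gamma$ between $q_1$ and $q_2$ into $N$ small equal pieces, construct approximate parallel transports between consecutive tangent cones using the $k_0$-comparison, compose them to obtain a candidate isometry $T: L_{q_1}\to L_{q_2}$, and assemble the global bound from the sum of single-step contributions. A diagonal extraction in $(N,j)$ yields the required subsequence $\{\wt\varepsilon_j\}$.

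Concretely, fix $N$ and place nodes $p_0 = q_1, p_1, \ldots, p_N = q_2$ along $\gamma$ at equal spacing $\ell/N$, where $\ell = |q_1q_2|$. For each $\wt\varepsilon_j$ and each step $i$, I construct a near-isometry $S_{i,j}: L_{p_i}\to L_{p_{i+1}}$ by picking, for each $u \in L_{p_i}$, the image $S_{i,j}u \in L_{p_{i+1}}$ that nearly minimizes $|\exp_{p_i}(\wt\varepsilon_j u), \exp_{p_{i+1}}(\wt\varepsilon_j\,\cdot\,)|$. The $k_0$-hinge comparison at $p_i$ and $p_{i+1}$ (right-angled, since vectors in $L$ are perpendicular to $\gamma^\pm$), combined with a second-variation calculation in the $k_0$-model space applied to the quadrilateral with vertices $p_i, p_{i+1}, \exp_{p_i}(\wt\varepsilon_j u_i), \exp_{p_{i+1}}(\wt\varepsilon_j u_{i+1})$, yields the single-step bound
\[
|\exp_{p_i}(\wt\varepsilon_j u_i), \exp_{p_{i+1}}(\wt\varepsilon_j u_{i+1})| \ls \frac{\ell}{N} + A_i\wt\varepsilon_j^2 + o(\wt\varepsilon_j^2),
\]
\[
A_i := \frac{N|u_{i+1}-u_i|^2}{2\ell} - \frac{k_0\ell}{6N}\bigl(|u_i|^2 + \ip{u_i}{u_{i+1}} + |u_{i+1}|^2\bigr),
\]
where the inner products are taken after identifying $L_{p_i}$ with $L_{p_{i+1}}$ via $S_{i,j}$.

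The composition $T_j^{(N)} := S_{N-1,j}\circ\cdots\circ S_{0,j}: L_{q_1}\to L_{q_2}$ admits a $1$-Lipschitz limit $T$ along a diagonal subsequence, extracted by compactness of $1$-Lipschitz maps on balls. I then choose the $u_i$'s to linearly interpolate, in the $S_{i,j}$-identifications, between $u \in L_{q_1}$ and $v \in L_{q_1}$, so that $u_0 = u$ and $u_N = Tv$ in the limit. Applying the triangle inequality along $\exp_{p_0}(\wt\varepsilon_j u_0),\ldots,\exp_{p_N}(\wt\varepsilon_j u_N)$ and summing the single-step bounds: the kinetic part $\sum_i N|u_{i+1}-u_i|^2/(2\ell)$ telescopes via Cauchy-Schwarz to $|uv|^2/(2\ell)$ (since $T$ is an isometry), and the curvature part, as a Riemann sum, converges to $-\frac{k_0\ell}{6}(|u|^2 + \ip{u}{v} + |v|^2)$ as $N\to\infty$, yielding the stated inequality along $\{\wt\varepsilon_j\}$.

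The principal obstacle is the uniformity of the error: the remainder $o(\wt\varepsilon_j^2)$ at each single step must be $o(\wt\varepsilon_j^2/N)$ \emph{uniformly} in $N$, so that $N$ copies still sum to $o(\wt\varepsilon_j^2)$. This requires a quantitative form of the $k_0$-hinge comparison at interior points of $\gamma$, which in turn rests on the direct-product splitting $T_{\gamma(t)} = L_{\gamma(t)}\oplus\R\,\gamma'(t)$ for $t \in (0,\ell)$ and the non-branching of geodesics in Alexandrov spaces. The limit $T$ is promoted from a $1$-Lipschitz map to a genuine isometry by running the construction backward from $q_2$ to $q_1$, producing $T^{-1}$ as a companion $1$-Lipschitz map.
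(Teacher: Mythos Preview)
The paper does not supply a proof of this proposition; it is quoted verbatim as Petrunin's second variation formula, with a reference to \cite{Pet-para}. So there is no in-paper argument to compare your proposal against. What can be assessed is whether your sketch is a viable route to Petrunin's theorem.

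Your outline has a circularity problem at its core. The ``single-step bound'' you write down,
\[
|\exp_{p_i}(\wt\varepsilon_j u_i),\ \exp_{p_{i+1}}(\wt\varepsilon_j u_{i+1})|
\ \ls\ \frac{\ell}{N}+\Big(\frac{N|u_{i+1}-u_i|^2}{2\ell}-\frac{k_0\ell}{6N}\big(|u_i|^2+\ip{u_i}{u_{i+1}}+|u_{i+1}|^2\big)\Big)\wt\varepsilon_j^2+o(\wt\varepsilon_j^2),
\]
is exactly the statement of the proposition applied to the short segment $[p_i,p_{i+1}]$, including the isometry $S_{i,j}$ identifying $L_{p_i}$ with $L_{p_{i+1}}$. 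You attribute it to ``the $k_0$-hinge comparison \ldots\ combined with a second-variation calculation in the model space,'' but hinge comparison alone does not produce this second-order inequality, nor does it hand you a near-isometry between the normal spaces; obtaining both is precisely the content of Petrunin's construction of parallel transport. If the single-step bound were really a direct consequence of hinge comparison, you could take $N=1$ and be done---the subdivision would be superfluous. In Petrunin's actual argument the isometry $T$ is built first (as a limit of explicit approximate parallel translations defined via directions to scaled auxiliary points, not via a minimization recipe), and the inequality (2.7) is then derived from a careful analysis using the first variation formula and the model-space comparison; the discretization appears inside that construction, not as a bootstrap of the final inequality from small scales to large.

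Your summation algebra (the $|uv|^2/(2\ell)$ term from linear interpolation, and the Riemann sum giving $|u|^2+|v|^2+\ip{u}{v}$) is correct, and the compactness/diagonal extraction to produce a limiting $1$-Lipschitz $T$, upgraded to an isometry by running the argument in reverse, is the right mechanism. But without an independent proof of the single-step estimate---and, as you yourself flag, uniform control of the $o(\wt\varepsilon_j^2)$ remainder across $N$ steps---the argument does not close. To make it work you would need to import essentially all of \cite{Pet-para}.
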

This proposition is similar to the equation (2.1) in smooth case.
Based on the second variation formula of arc-length, we can propose a condition which resembles the lower bounds for the radial curvature along the geodesic $\gamma$.

  Let $M$ be an $n-$dimensional Alexandrov space without boundary.

\begin{defn} Let $\sigma(t):(-\ell,\ell)\to M$ be a geodesic  and $\{g_{\sigma(t)}(\xi)\}_{-\ell<t<\ell}$ be a  family of  functions on $\Lambda_{\sigma(t)}$ such that $g_{\gamma(t)}$ is continuous on $\Lambda_{\gamma(t)}$ for each $t\in(-\ell,\ell)$.
We say that the family $\{g_{\sigma(t)}(\xi)\}_{-\ell<t<\ell}$ satisfies $Condition\ (RC)$ on $\sigma$ if
for any two points $q_1,q_2\in \sigma$ and  any sequence $\{\theta_j\}_{j=1}^\infty$ with $\theta_j\to 0$ as $j\to\infty$, there exists an isometry $T: \Lambda_{q_1}\to \Lambda_{q_2}$ and a subsequence $\{\delta_j\}$ of $\{\theta_j\}$ such that
\begin{equation}\begin{split} |\exp&_{q_1}(\delta_j l_1T\xi),\ \exp_{q_2}(\delta_j l_2\xi)|\\ \ls  &|q_1q_2|+\Big(\frac{(l_1-l_2)^2}{2|q_1q_2|}
-\frac{g_{q_1}(\xi)\cdot|q_1q_2|}{6}\cdot(l_1^2+l_1\cdot l_2+l^2_2)\Big)\cdot\delta^2_j+o(\delta_j^2)\end{split}\end{equation} for any $l_1,l_2\gs0$ and any $\xi\in \Lambda_{q_1}$.
\end{defn}

Clearly, the above Proposition 2.4 shows that the family $\{g_{\sigma(t)}(\xi)=k_0\}_{-\ell<t<\ell}$ satisfies Condition $(RC)$ on $\sigma$. In particular, if a family $\{g_{\sigma(t)}(\xi)\}_{-\ell<t<\ell}$ satisfies Condition $(RC)$, then the family  $\{g_{\sigma(t)}(\xi)\vee k_0\}_{-\ell<t<\ell}$ satisfies Condition $(RC)$ too.
\begin{defn} Let  $\gamma:[0,a)\to M$ be a geodesic. We say that $M$ has \emph{Ricci curvature bounded below by $ (n-1)K$ along $\gamma$},
if for any $\epsilon>0$ and any $0<t_0<a$, there exists $\ell=\ell(t_0,\epsilon)>0$ and a family of continuous functions $\{g_{\gamma(t)}(\xi)\}_{t_0-\ell<t<t_0+\ell}$ on $\Lambda_{\gamma(t)}$ such that the family satisfies $Condition\ (RC)$ on $\gamma|_{(t_0-\ell,\ t_0+\ell)}$ and \begin{equation}\oint_{\Lambda_{\gamma(t)}}g_{\gamma(t)}(\xi)\gs K-\epsilon \qquad \forall t\in(t_0-\ell,t_0+\ell),\end{equation}
where $\oint_{\Lambda_{x}}g_x(\xi)=\frac{1}{vol(\Lambda_x)}\int_{\Lambda_x}g_x(\xi)d\xi$.

 We say that $M$ has \emph{Ricci curvature  bounded below by $(n-1)K$} (\emph{locally}), denoted by $Ric(M)\gs (n-1)K$, if  each point $x\in M$ has  a neighborhood $U_x$   such that  $M$ has Ricci curvature bounded below by $(n-1)K$ along every geodesic $\gamma$ in $ U_x$.
 \end{defn}
\be{rem}{
(i) In a Riemannian manifold, this definition on Ricci curvature bounded below by $(n-1)K$ is exactly the  classical one. \\
\indent (ii) Let $M$ be an $n-$dimensional Alexandrov space  with curvature $\gs K$. The  Proposition 2.4 above shows that  $Ric(M)\gs(n-1)K$.}
\be{prop}
{ This curvature condition is a local condition. Namely, $Ric(M)\gs (n-1)K$ implies that $M$ has Ricci curvature  bounded below by $(n-1)K$ along  every geodesic in $M$.}
 Indeed,
 let $\gamma:[0,a)\to M$ be a geodesic. If $M$ has Ricci curvature bounded below by $ (n-1)K$ along $\gamma|_{[0,b_2)}$ and
$\gamma|_{[b_1,a)}$ with $0<b_1<b_2<a$, then $M$ has Ricci curvature bounded below by $ (n-1)K$ along $\gamma$.

Recently, in \cite{Pet-lv}, Petrunin proved that an $n-$dimensional Alexandrov space $M$ with  curvature $\gs K$ satisfies the curvature-dimension condition $CD(n,(n-1)K)$. Later in \cite{ZZ}, we can modify Petrunin's proof to prove the following
\be{prop}{$($see \cite{ZZ}$)$\indent An $n-$dimensional Alexandrov space $M$ having $Ric(M)\gs (n-1)K$  must satisfy $CD(n,(n-1)K)$.}

The relations among these various definitions on lower bound of Ricci curvature is summarized as follows:\\
\indent on an $n-$dimensional Alexandrov space $M^n$, there holds
$$ Ric\gs (n-1)K\Rightarrow CD(n,(n-1)K)\Rightarrow MCP(n,(n-1)K)\Leftrightarrow BG((n-1)K).$$
Obviously, all of these conditions are equivalent to each other on a smooth Riemannian manifold.
\begin{prob} Is the Ricci curvature condition $Ric(M)\gs(n-1)K$  equivalent to the curvature-dimension condition $CD(n,(n-1)K)$ on any $n-$dimensional Alexandrov spaces $M$? \end{prob}

\section{Basic comparison estimates}
\subsection{Laplacian comparison theorem}  Let $M$ be an $n-$dimensional Alexandrov space without boundary.  A canonical Dirichlet form $\mathcal{E}$ is defined by $$\mathcal{E}(u,v):=\int_M\ip{\nabla u}{\nabla v}d{\rm vol},\qquad {\rm for}\ \ u,v\in W^{1,2}_0(M).$$(see \cite{KMS}). The Laplacian associated to the canonical Dirichlet  form is given as follows.
Let $u: U\subset M\to \R$ be a $\lambda-$concave function. The (canonical) Lapliacian of $u$ as a sign-Radon measure is defined by
 \begin{equation}\int_M \phi d\Delta u=-\mathcal{E}(u,\phi)=-\int_M\ip{\nabla \phi}{\nabla u}d{\rm vol}\end{equation} for all Lipschitz function $\phi$ with compact support in $U.$ In \cite{Pet-lv}, Petrunin  proved \begin{equation*}\Delta u\ls n\lambda\cdot{\rm vol},\end{equation*} in particular, the singular part of $\Delta u$ is non-positive. If $M$ has curvature $\gs K$, then any distance function $dist_p(x):=d(p,x)$ is $\cot_K\circ dist_p-$concave on $M\backslash\{p\}$,  where the function $\cot_K(s)$ is defined  by $$\cot_K(s)=\begin{cases}\frac{\sqrt K\cdot\cos(\sqrt K s)}{\sin(\sqrt Ks)}\ & {\rm if}\quad K>0,\\
\frac{1}{s}\ &{\rm if}\quad K=0,\\
\frac{\sqrt{-K}\cdot\cosh(\sqrt{-K}s)}{\sinh(\sqrt{-K}s)}\ & {\rm if}\quad K<0.
\end{cases} $$Therefore the inequality $\Delta u\ls n\lambda\cdot{\rm vol}$ gives a Laplacian comparison theorem for the distance function on Alexandrov spaces.

   In \cite{KS-lap}, by using the $DC-$structure (see \cite{Per-dc}),  Kuwae-Shioya
defined a distributional Laplacian for a distance function $dist_p$ by $$\Delta dist_p=D_i\big(\sqrt{det(g_{ij})}g^{ij}\partial_j dist_p\big)$$ on a local chart of $M\backslash \mathcal S_\epsilon$ for sufficiently small  positive number $\epsilon$, where $$\mathcal S_\epsilon:=\{x\in M:\ {\rm vol}(\Sigma_x)\ls {\rm vol}(\mathbb{S}^{n-1})-\epsilon\}$$and $D_i$ is the distributional derivative. Note that the union of all $\mathcal S_\epsilon$ has zero measure. One can view the distributional Laplacian $\Delta dist_p$ as a sign-Radon measure. In \cite{KMS}, Kuwae, Machigashira and Shioya proved that  the distributional Laplacian is  actually a representation of  the previous (canonical) Laplacian in $M\backslash \mathcal S_\epsilon$.  Moreover in \cite{KS-lap}, Kuwae and Shioya extended the Laplacian comparison theorem  under
the weaker condition $BG(k)$:
\begin{thm} $($\cite{KS-lap}$)$\indent If an $n-$dimensional Alexandrov space $ M$ satisfies $BG(k)$,
then $$ d \Delta dist_p \ls (n -1) \cot_k\circ dist_p\cdot d{\rm vol}\qquad {\rm on}\quad M\backslash(\{p\}\cup S_\epsilon).$$
\end{thm}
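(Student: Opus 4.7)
The strategy is to realize $\Delta dist_p$, in polar coordinates around $p$, as the radial logarithmic derivative of the area density $A_p(r,\xi)$ of ${\rm vol}$, and then to read off the upper bound from the $BG(k)$-monotonicity of $A_p$. On the full-measure open set $W_p=M\backslash(\{p\}\cup C_p)$, the exponential map $\exp_p$ is a bijection onto its image, and the now-standard regularity theory (Otsu-Shioya) furnishes the disintegration $d{\rm vol}=A_p(r,\xi)\,dr\,d\xi$ together with the differentiability of $dist_p$ with unit radial gradient $\nabla dist_p=\partial_r$ there.

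Given a nonnegative Lipschitz test function $\phi$ compactly supported in $M\backslash(\{p\}\cup\mathcal S_\epsilon)$, I would use the identification by \cite{KMS} of the distributional Laplacian on the DC-chart $M\backslash\mathcal S_\epsilon$ with the canonical Laplacian, and compute
$$\int \phi\, d\Delta dist_p=-\mathcal E(dist_p,\phi)=-\int_{\Sigma_p}\!\int_0^{c(\xi)}\partial_r\phi\cdot A_p(r,\xi)\,dr\,d\xi.$$
An integration by parts in $r$ along each ray (with $A_p(0^+,\xi)=0$, so that the inner boundary term vanishes) produces the bulk term $\int\phi\cdot\partial_r A_p\,dr\,d\xi$ plus a cut-locus boundary contribution $-\int_{\Sigma_p}\phi(\exp_p(c(\xi)\xi))\cdot A_p(c(\xi)^-,\xi)\,d\xi$, which is nonpositive when $\phi\gs0$ because $A_p$ drops at the cut locus (where $\exp_p$ loses injectivity, so the radial pushforward of ${\rm vol}$ loses mass). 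Dropping this non-positive term yields
$$\int\phi\,d\Delta dist_p\ls\int\phi\cdot\partial_r A_p(r,\xi)\,dr\,d\xi.$$

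To finish, I would apply $BG(k)$: by the definition of $MCP(n,k)$ (equivalent to $BG(k)$) via the pushforward inequality $(2.6)$ combined with the polar expression $\Phi_{p,t,*}{\rm vol}(A)=\int_{\exp_p^{-1}(A\backslash C_p)}t\,A_p(tr,\xi)\,dr\,d\xi$ recorded just below $(2.6)$, one reads off, for each direction $\xi\in \Sigma_p$, that the ratio $A_p(r,\xi)/(s_k(r/\sqrt{n-1}))^{n-1}$ is non-increasing in $r$ on $(0,c(\xi))$. Taking the logarithmic $r$-derivative and substituting into the previous display yields
$$\int\phi\,d\Delta dist_p\ls\int\phi\cdot(n-1)\cot_k\circ dist_p\,d{\rm vol}$$
for every nonnegative test $\phi$ supported in $M\backslash(\{p\}\cup\mathcal S_\epsilon)$, after matching the scaling between $s_k(\cdot/\sqrt{n-1})^{n-1}$ and $\cot_k$. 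The main obstacle is the integration-by-parts step: it must be carried out rigorously on the singular space, which requires working in Perelman's DC-chart on $M\backslash\mathcal S_\epsilon$, identifying $\nabla dist_p=\partial_r$ in the DC-sense ${\rm vol}$-a.e., and justifying the non-positive sign of the cut-locus boundary term as sketched above. The $BG(k)$-input in the final step is then a pointwise consequence of the definition.
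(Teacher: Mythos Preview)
The paper does not supply its own proof of this theorem: it is stated as Theorem~3.1 with an attribution to Kuwae--Shioya \cite{KS-lap} and no argument is given. So there is nothing in the present paper to compare your proposal against.

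That said, your outline is essentially the standard route taken in \cite{KS-lap}: pass to polar coordinates via $\exp_p$ on $W_p$, write $d{\rm vol}=A_p(r,\xi)\,dr\,d\xi$, integrate by parts in $r$ so that $\Delta dist_p$ is governed by $\partial_r\log A_p$ plus a non-positive cut-locus contribution, and then invoke the $BG(k)$ monotonicity of $A_p(r,\xi)/s_k(r)^{n-1}$ to bound $\partial_r\log A_p\ls (n-1)\cot_k(r)$. Your identification of the delicate point is correct: the real work lies in making the radial integration by parts rigorous on the DC-chart of $M\backslash\mathcal S_\epsilon$, in particular justifying that $\nabla dist_p$ agrees with $\partial_r$ in the $BV$/DC sense and that the cut-locus term has the right sign. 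One minor caution on normalization: in Kuwae--Shioya's convention for $BG(k)$, the comparison density is $s_k(r)^{n-1}$ (with $k$ playing the role of the sectional-curvature scale), not $s_k(r/\sqrt{n-1})^{n-1}$; the latter scaling belongs to the $MCP(n,k)$ formulation where $k$ is the Ricci bound. With that adjustment the logarithmic derivative directly yields $(n-1)\cot_k$ and no further ``matching'' is needed.
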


 Both of the above canonical Laplacian  and its DC representation (i.e. the distributional Laplacian) make sense up to a set
which has zero measure.  In particular, they do not make sense along
a geodesic.

In \cite{ZZ}, the authors  defined a new version of Laplacian for a
distance function from a given point $p\in M$ along a geodesic and
proved a comparison theorem for the new defined Laplacian under the
Ricci curvature condition defined in Section 2.3. This version of
Laplacian comparison theorem makes pointwise sense in $W_p:=
M\backslash C_p$.

  Fix a geodesic $\gamma:[0,\ell)\rightarrow M$ with  $\gamma(0)=p$ and denote $f:=dist_{p}$. Let $x\in \gamma\backslash\{p\}$ and $L_x,\ \Lambda_x$ be as above in the end of Section 1. Clearly, we  may assume  that $M$ has curvature $\gs k_0$ (for some $k_0<0$)  in a neighborhood  $U_\gamma$ of $\gamma$.

Throughout this paper,  $\mathcal S$ will always denote the set of
all sequences $\{\theta_j\}^\infty_{j=1}$ with $\theta_j \to0$ as
$j\to\infty$ and $\theta_{j+1}\ls \theta_j$.

 We define a  version of Hessian and Laplacian for the distance function $f$ along the geodesic $\gamma$ as follows.
\begin{defn}Let  $x\in \gamma\backslash\{p\}$.  Given a sequence $\theta:=\{\theta_j\}_{j=1}^\infty\in\mathcal S$, we define a function $H^\theta_xf:\Lambda_x\rightarrow\mathbb{R}$ by
$$H^\theta_xf(\xi)\overset{def}{=}\limsup_{s\to0, \ s\in \theta}\frac{f\circ\exp_x(s\cdot\xi)-f(x)}{s^2/2};$$
and $$\Delta^\theta f(x)\overset{def}{=}(n-1)\cdot\oint_{\Lambda_x} H^\theta_xf(\xi).$$
\end{defn}

Denote by $Reg_h$ the regular set of semi-concave function $h$, (i.e., the set of points $z\in M$ such that $z$ is regular and $Hess_zh$ is a bilinear  form on $T_z$).
If we write  the Lebesgue decomposition of the canonical Laplacian $\Delta f=(\Delta f)^{sing}+(\Delta f)^{ac}\cdot{\rm vol}$,  with respect to the $n-$dimension Hausdorff measure ${\rm vol}$, then $(\Delta f)^{ac}(x)={\rm Tr}Hess_xf=\Delta^\theta f(x)$ for all $x\in W_p\cap Reg_f$ and $\theta\in\mathcal S$. It was shown  in \cite{OS-rie,Per-dc} that $Reg_f\cap W_p$ has full measure. Thus  $\Delta^\theta f(x)$ is actually a representation of the absolutely continuous part of the canonical Laplacian $\Delta f$ in $W_p$.

The propagation of the above defined Hessian along the geodesic $\gamma$ is described by the following result.
\begin{prop}  $($\cite{ZZ}$)$ Let $\{g(\xi)\}_{t_0-\epsilon<t<t_0+\epsilon}$ be a family of continuous functions on $\Lambda_{\gamma(t)}$ which satisfies Condition $(RC)$  on $\gamma|_{(t_0-\epsilon,t_0+\epsilon)}.$  Consider a sequence $\{\theta_j\}_{j=1}^\infty\in \mathcal S$, and $y,z\in \gamma|_{(t_0-\epsilon,t_0+\epsilon)}$ with $|py|<|pz|.$ Assume that a isometry $T:\Lambda_z\to\Lambda_y$ and  a subsequence $\delta:=\{\delta_j\}\subset\{\theta_j\} $ such that (2.8) holds. Then  we have \begin{equation}H^\delta_zf(\xi)\leqslant l^2\cdot H^\delta_yf(\eta)+ \frac{(l-1)^2}{|yz|}-\frac{l^2+l+1}{3}\cdot|yz|\cdot g_z(\xi) \end{equation}for any $l\geqslant0$ and any $\xi\in\Lambda_z,\ \eta=T\xi\in\Lambda_y$.
\end{prop}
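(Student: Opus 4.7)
The plan is to compare the distance function $f=\mathrm{dist}_p$ at the points $\exp_z(\delta_j\xi)$ and $\exp_y(\delta_j l\eta)$ by the triangle inequality, and to control the residual cross-distance using Condition~(RC). Since $y$ lies between $p$ and $z$ on $\gamma$, one has $f(z)=f(y)+|yz|$, so the triangle inequality rearranges as
$$f\bigl(\exp_z(\delta_j\xi)\bigr)-f(z)\;\le\;\bigl[f\bigl(\exp_y(\delta_j l\eta)\bigr)-f(y)\bigr]+\bigl[\bigl|\exp_z(\delta_j\xi),\exp_y(\delta_j l\eta)\bigr|-|yz|\bigr].$$
Dividing through by $\delta_j^2/2$ and taking $\limsup_{j\to\infty}$, the left-hand side is $H^\delta_z f(\xi)$ by definition, and the proposition follows once both terms on the right are appropriately bounded.

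For the second bracket, this is exactly the content of Condition~(RC) applied to $(q_1,q_2)=(z,y)$ with $l_1=1$, $l_2=l$ and the prescribed isometry $T$: inequality~(2.8) yields
$$\frac{|\exp_z(\delta_j\xi),\exp_y(\delta_j l\eta)|-|yz|}{\delta_j^2/2}\;\le\;\frac{(l-1)^2}{|yz|}-\frac{l^2+l+1}{3}\cdot|yz|\cdot g_z(\xi)+o(1).$$
For the first bracket, one rescales the argument, writing
$$\frac{f(\exp_y(\delta_j l\eta))-f(y)}{\delta_j^2/2}\;=\;l^2\cdot\frac{f(\exp_y((l\delta_j)\eta))-f(y)}{(l\delta_j)^2/2},$$
so that its $\limsup$ along $\{\delta_j\}$ equals $l^2$ times the $\limsup$ of the normalized second difference of $f\circ\exp_y$ at $y$ in direction $\eta$, evaluated along the scaled sequence $\{l\delta_j\}$.

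The only nontrivial point is to identify this last quantity with $l^2\,H^\delta_y f(\eta)$, since $H^\delta_y f(\eta)$ is defined as a $\limsup$ along $\{\delta_j\}$ rather than $\{l\delta_j\}$. This is handled by a diagonal extraction at the outset: given the original $\theta\in\mathcal{S}$, first pass to a subsequence along which the scaled-sequence limsup is realized, and then invoke Condition~(RC) on this refined sequence to produce the isometry $T$ and the final subsequence $\delta$. With $\delta$ chosen so that both $\{\delta_j\}$ and $\{l\delta_j\}$ witness the relevant limsups, the two estimates combine to give
$$H^\delta_z f(\xi)\;\le\;l^2\,H^\delta_y f(\eta)+\frac{(l-1)^2}{|yz|}-\frac{l^2+l+1}{3}\cdot|yz|\cdot g_z(\xi).$$
The geometric content is therefore a straightforward triangle-inequality application, with the Ricci/(RC) input appearing only through the second-order correction in~(2.8); the real obstacle is in coordinating the $\limsup$ operations along rescaled subsequences, which is resolved by the diagonal extraction above exploiting the freedom to apply~(RC) to any prescribed null sequence.
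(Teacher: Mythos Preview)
Your triangle-inequality approach is exactly the intended argument (the present paper gives no proof, only the citation to \cite{ZZ}). The decomposition
\[
f(\exp_z(\delta_j\xi))-f(z)\;\le\;\bigl[f(\exp_y(\delta_j l\eta))-f(y)\bigr]+\bigl[|\exp_z(\delta_j\xi),\exp_y(\delta_j l\eta)|-|yz|\bigr]
\]
together with the bound on the second bracket coming from (2.8) is correct and gives the geometric content of the proposition.

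Your fix for the rescaling subtlety, however, does not work as written. Two objections. First, the hypothesis of the proposition \emph{gives} the pair $(T,\delta)$ satisfying (2.8); you are not free to insert a preliminary subsequence extraction and then ``invoke Condition (RC) on this refined sequence'', since that reverses the logical order of the statement you are asked to prove. Second, and more seriously, the conclusion must hold for \emph{every} $l\ge 0$, whereas your diagonal extraction is tailored to one particular $l$: a single subsequence $\delta$ cannot be arranged so that the second-order quotient at $y$ realizes its limsup simultaneously along $\{l\delta_j\}$ for uncountably many values of $l$.

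What your argument cleanly establishes is the inequality with $H^{l\delta}_y f(\eta)$ (the limsup along the rescaled sequence $\{l\delta_j\}$) in place of $H^{\delta}_y f(\eta)$ on the right-hand side. For the application in the proof of Theorem~3.4 this suffices, since at each inductive step one passes to a further subsequence anyway; but it does not literally match the proposition as stated here. Either the statement tacitly allows this flexibility, or the reconciliation in \cite{ZZ} relies on an additional observation (about the sequence-independence of the second-order quotient at interior geodesic points) that your write-up does not supply.
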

By using the above propagation inequality for the Hessian, we can obtain the following comparison for the new defined Laplacian.
\begin{thm}$($\cite{ZZ}$)$  Let  $x\in \gamma\backslash\{p\}$. If  $M$ has Ric $\geqslant (n-1)K$ along the geodesic $\gamma(t)$, then,
given  any sequence $\{\theta_j\}_{j=1}^\infty\in\mathcal S$, there exists a subsequence $\delta=\{\delta_j\}$ of $\{\theta_j\} $
 such that $$\Delta^{\delta} f(x)\ls(n-1)\cdot\cot_K(|px|).$$
(If $K>0$, we add assumption $|px|<\pi/\sqrt K$).

Furthermore,  if
\begin{equation}\Delta^{\theta'} f(x)= (n-1)\cdot\cot_K(|px|)\end{equation}for any subsequence $\theta'=\{\theta'_j\}$ of $\delta$, then
there exists a subsequence $\delta'=\{\delta'_j\}$ of $\delta$ such that \begin{equation}H^{ \delta'}_xf= \cot_K(|px|)\end{equation} almost everywhere in $\Lambda_x$.
\end{thm}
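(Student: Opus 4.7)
The plan is to derive a pointwise Riccati-type inequality from Proposition 3.2 by optimally choosing the free parameter $l$ for each direction, average it over $\Lambda_{\gamma(t)}$ with Cauchy--Schwarz to obtain a scalar discrete Riccati inequality for $u(t):=\oint_{\Lambda_{\gamma(t)}}H^\delta_{\gamma(t)}f$ along $\gamma$, and compare this with the continuous Riccati ODE $v'+v^2+K=0$, whose solution is $\cot_K$. The rigidity part then follows by identifying which of these inequalities must be saturated when the hypothesis that $\Delta^\delta f(x)=(n-1)\cot_K(|px|)$ is stable under further subsequencing holds.

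Concretely, fix $\varepsilon>0$ and a partition $0=s_0<s_1<\dots<s_N=|px|$ with small mesh whose pieces lie inside neighborhoods on which the Ricci hypothesis supplies a continuous family $\{g_{\gamma(t)}\}$ satisfying Condition $(RC)$ with $\oint_{\Lambda_{\gamma(t)}}g_{\gamma(t)}\geq K-\varepsilon$. Applying Condition $(RC)$ to each pair $(\gamma(s_{i-1}),\gamma(s_i))$ and diagonalising across $i=1,\dots,N$, I extract a single subsequence $\delta\subset\theta$ and isometries $T_i:\Lambda_{\gamma(s_i)}\to\Lambda_{\gamma(s_{i-1})}$ for which the propagation inequality of Proposition 3.2 holds at every step. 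For each $\xi\in\Lambda_{\gamma(s_i)}$, optimising $l$ in that inequality to leading order (the minimiser is $l=1-H^\delta_{\gamma(s_{i-1})}f(T_i\xi)\,h_i+O(h_i^2)$ with $h_i=s_i-s_{i-1}$) yields the pointwise estimate
$$H^\delta_{\gamma(s_i)}f(\xi)\leq H^\delta_{\gamma(s_{i-1})}f(T_i\xi)-\bigl(H^\delta_{\gamma(s_{i-1})}f(T_i\xi)\bigr)^2 h_i-g_{\gamma(s_i)}(\xi)\,h_i+O(h_i^2).$$
Averaging over $\xi\in\Lambda_{\gamma(s_i)}$ (the measure is preserved by $T_i$), applying Cauchy--Schwarz in the form $\oint(H^\delta_{\gamma(s_{i-1})}f)^2\geq u(s_{i-1})^2$, and using the Ricci condition on $g$, I obtain the discrete Riccati inequality $u(s_i)-u(s_{i-1})\leq-(u(s_{i-1})^2+K-\varepsilon)\,h_i+O(h_i^2)$.

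Initialising at small $s_1$ via the Alexandrov comparison bound $H^\delta_{\gamma(s_1)}f\leq\cot_{k_0}(s_1)=1/s_1+O(s_1)$, so that $u(s_1)-\cot_K(s_1)\to 0$ as $s_1\to 0$, and comparing $V:=u-\cot_K$ via a Gr\"onwall-type argument (its sign-controlled linear inequality obtained by subtracting the ODE $v'+v^2+K=0$), I pass to the limits $N\to\infty$, $s_1\to 0$, $\varepsilon\to 0$ to conclude $u(|px|)\leq\cot_K(|px|)$, which is the first conclusion. For the rigidity part, the hypothesis that $\Delta^{\theta'}f(x)=(n-1)\cot_K(|px|)$ for every subsequence $\theta'\subset\delta$ forces equality $u^\delta(t_0)=\cot_K(t_0)$ together with stability under further subsequencing. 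Since $V\leq 0$ and $V(t_0)=0$, the Gr\"onwall comparison actually forces $V\equiv 0$ on $(s_1,t_0]$, so equality holds at every intermediate step; in particular the Cauchy--Schwarz step at $t=t_0$ must be an equality, which requires $H^\delta_xf$ to be a.e.\ constant on $\Lambda_x$, and by $\oint H^\delta_xf=\cot_K(|px|)$ this constant must equal $\cot_K(|px|)$. A final diagonal extraction produces $\delta'\subset\delta$ along which the limsups defining $H^{\delta'}_xf$ are attained as limits, yielding $H^{\delta'}_xf=\cot_K(|px|)$ a.e.\ on $\Lambda_x$.

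The main obstacle I anticipate is the subsequence bookkeeping: Condition $(RC)$ only produces a further subsequence and a new isometry at each pair $(s_{i-1},s_i)$, so the diagonal extraction must simultaneously handle all $N$ pairs while the mesh $h_i\to 0$ and the cumulative $O(h_i^2)$ errors remain summably small. A secondary technical point is the initialisation near $p$, where both $u(s_1)$ and $\cot_K(s_1)$ diverge like $1/s_1$; matching them relies on the local Alexandrov lower curvature bound $k_0$ and a Gr\"onwall-type argument applied to the near-singular Riccati ODE.
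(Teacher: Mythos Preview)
Your approach to the first assertion---partition $\gamma$, extract a common subsequence via Condition $(RC)$, derive a discrete Riccati inequality for $u(t)=\oint_{\Lambda_{\gamma(t)}}H^\delta_{\gamma(t)}f$ from Proposition 3.3, and compare with the ODE $v'+v^2+K=0$---matches the paper's strategy. The paper phrases the comparison as an induction showing $u(s_j)\le\cot_{K'}(s_j-\epsilon)$ for an auxiliary $K'<K$, whereas you package it as a Gr\"onwall argument; these are equivalent. One cosmetic point: the paper averages Proposition 3.3 first and then optimizes a single $l$, while you optimize $l$ pointwise and then invoke Cauchy--Schwarz to pass from $\oint(H^\delta_yf)^2$ to $u(y)^2$; both routes yield the same discrete Riccati inequality.

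The rigidity argument, however, has a genuine gap. Your Cauchy--Schwarz slack at step $i$ is
\[
S_i\;=\;\oint_{\Lambda_{\gamma(s_{i-1})}}\bigl(H^\delta_{\gamma(s_{i-1})}f\bigr)^2-u(s_{i-1})^2,
\]
which concerns the \emph{penultimate} mesh point $\gamma(s_{i-1})$, not the terminal point $x=\gamma(s_N)$. So even if the last step were forced to equality, the conclusion would be about $H^\delta_{\gamma(s_{N-1})}f$, not about $H^\delta_xf$. More seriously, the equality $V(t_0)=0$ only forces the \emph{aggregated} slack $\sum_i S_i h_i$ to vanish as the mesh and $\epsilon$ tend to zero; since each $h_i\to 0$, this gives no control on any individual $S_i$. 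Hence you cannot deduce that $H^\delta_yf$ is a.e.\ constant at any fixed intermediate point $y$ along any fixed subsequence $\delta$, and in particular you cannot bootstrap to $x$.

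The paper avoids this by working quantitatively rather than through exact equality: for each $\epsilon>0$ the discrete iteration produces an explicit integrable majorant $h$ on $\Lambda_x$ with $H^{\delta'}_xf\le h$ and $\oint_{\Lambda_x}(h-\cot_K(|px|))^2\le(3+2|\cot_K(|px|)|)\,\epsilon$. A diagonal argument in $\epsilon$ then yields a subsequence $\delta'$ with $H^{\delta'}_xf\le\cot_K(|px|)$ a.e., and combining this pointwise upper bound with the hypothesis $\oint H^{\delta'}_xf=\cot_K(|px|)$ forces equality a.e. Your Cauchy--Schwarz intuition is in the right direction, but it must be made into this quantitative $L^2$ estimate on a majorant of $H^{\delta'}_xf$ itself, not merely an asymptotic statement about slack at earlier points.
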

\begin{proof}A sketch of the proof is given as follows.
Fixed any number $\epsilon>0$ and $K'<K$, we can choose $y\in\gamma$ between $p$ and $x$ such that $$\cot_{k_0}(|py|)\ls \cot_{K'}(|py|-\epsilon).$$
Divide the segment $\gamma_{yx}$ sufficiently fine by points $x_0=y,\ x_1,\ \cdots, \ x_{N-1}$ and $x_N=x$ with  $|px_j|<|px_{j+1}|.$ By using the lower bound of Ricci curvature, Proposition 3.3 and  an induction argument, we can prove that $$\Delta^{\delta} f(x_j)\ls(n-1)\cdot\cot_{K'}(|px_j|-\epsilon)$$ for all $1\ls j\ls N$ and some subsequence $\delta$. Then a standard diagonal argument will imply  the first assertion of the theorem.

Now we suppose
\begin{equation*}\Delta^{\theta'} f(x)= (n-1)\cdot\cot_K(|px|)\end{equation*}for any subsequence $\theta'=\{\theta'_j\}$ of $\delta$. From a discrete argument, we can get that, for any $\epsilon>0$, there is  a subsequence
$\delta'=\{\delta'_j\}$ of $\delta$ and an integrable function $h$ on
$\Lambda_x$ such that $$H^{\delta'}_xf\ls h\quad {\rm and }\quad
\oint_{\Lambda_x} \big(h-\cot_K(|px|)\big)^2\ls
\big(3+2|\cot_K(|px|)|\big)\epsilon.$$
By a standard diagonal argument, we can obtain a new subsequence of $\delta$, denoted by  $\delta'$ again, such that $$ H^{\delta'}_xf\ls \cot_K(|px|)$$ almost everywhere in $\Lambda_x.$  This implies  the second assertion of the theorem.
\end{proof}
\be{rem}{Consider the canonical Laplacian $\Delta f$ which is a sign-Radon measure. Its Lebesgue decomposition  with respect to the $n-$dimension Hausdorff measure ${\rm vol}$ is written as $\Delta f=(\Delta f)^{sing}+(\Delta f)^{ac}\cdot{\rm vol}$. The above Theorem 3.4 gives an upper bound for the continuity part $(\Delta f)^{ac}$. We have seen  that $ (\Delta f)^{sing}$ is non-positive. Thus Theorem 3.4 is actually  giving a  Laplacian comparison theorem in sense of measure (or distribution).}

We now define the \emph{upper Hessian} of $f$, $\overline{Hess}_xf:\ T_x\rightarrow\mathbb{R}\cup\{-\infty\}$ by  \begin{equation}\overline{Hess}_xf(v,v)\overset{def}{=}\limsup_{s\to0}\frac{f\circ\exp_x(s\cdot v)-f(x)-d_xf(v)\cdot s}{s^2/2}\end{equation} for any $v\in T_x$.
Clearly, this definition also works for any semi-concave function on $M$. Indeed, if $u$ is a $\lambda-$concave function, then its upper Hessian $\overline{Hess}_xu(\xi,\xi)\ls \lambda$ for any $\xi\in\Sigma_x$.

Given $K\in\R$, consider the function $\varrho_K$ defined by$$\varrho_K(\upsilon)=\begin{cases}\frac{1}{K}\big(1-\cos(\sqrt K\upsilon)\big)\ & {\rm if}\quad K>0,\\
\frac{\upsilon^2}{2}\ &{\rm if}\quad K=0,\\
\frac{1}{K}\big(\cosh(\sqrt{-K}\upsilon)-1\big) \ & {\rm if}\quad
K<0.
\end{cases} $$
The following proposition is concerned with  the rigidity part of Theorem 3.4.
\begin{prop}$($\cite{ZZ}$)$
Let $M$ be an $n-$dimensional Alexandrov space with $Ric(M) \gs
(n-1)K$.  Suppose that $B_p(R)\backslash\{p\}\subset W_p$ for some
$0<R\ls\pi/\sqrt K$ (if $K\ls 0$, we set $\pi/\sqrt K$ to be
$+\infty$). Assume that for almost every $x\in
B_p(R)\backslash\{p\}$, there exists a sequence $\theta:=
 \{\theta_j\}_{j=1}^\infty\in \mathcal S$ such that $\Delta^{\theta'}f(x)=\cot_K(|px|)$ for any subsequence $\theta'\subset \theta$.

  Then the function $\varrho_K\circ f$ is $(1-K\cdot\varrho_K\circ f)-$concave in $B_p(R)\backslash\{p\}$.

Consequently, if $\sigma(t)$ and $\varsigma(t)$ are  two geodesics in $B_p(R)$ with $\sigma(0)=\varsigma(0)=p$, and   $$\varphi(\tau,\tau')=\wa_K \sigma(\tau)p\varsigma(\tau')$$ is the comparison angle of $\angle \sigma(\tau)p\varsigma(\tau')$ in the $K-$plane, then  $\varphi(\tau,\tau')$ is non-increasing with respect to $\tau$ and $\tau'$.

(If $K>0$, we add the assumption that
$\tau+\tau'+|\sigma(\tau)\varsigma(\tau')|<2\pi/\sqrt K$).
 \end{prop}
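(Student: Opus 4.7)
The plan is to prove (a) by reducing the $(1-KF)$-concavity of $F := \varrho_K\circ f$ to a pointwise transverse Hessian bound on $f = dist_p$, and to establish that bound by propagating the a.e.\ rigidity of Theorem 3.4 along radial geodesics via Proposition 3.3. A Taylor expansion of $F$ along $\exp_x(s\xi)$, combined with the ODE $\varrho_K''+K\varrho_K=1$ and the identity $\varrho_K'(t)\cot_K(t)=\varrho_K''(t)=1-K\varrho_K(t)$, yields the chain-rule identity
$$\overline{Hess}_x F(\xi,\xi)=\varrho_K'(f(x))\,\overline{Hess}_x f(\xi,\xi)+(1-KF(x))\cos^2\alpha,\qquad \alpha := \an(\xi,\ua_p^x).$$
Hence the claim $\overline{Hess}_xF(\xi,\xi)\ls 1-KF(x)$ is equivalent to $\overline{Hess}_x f(\xi,\xi)\ls\cot_K(f(x))\sin^2\alpha$, the main case being $\xi\in\Lambda_x$ (i.e.\ $\alpha=\pi/2$) where it becomes $\overline{Hess}_x f(\xi,\xi)\ls\cot_K(f(x))$.

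To establish this transverse bound, fix $z\in B_p(R)\setminus\{p\}$ and $\xi\in\Lambda_z$. By the hypothesis and the rigidity part of Theorem 3.4, at a.e.\ $y$ on $\gamma_{pz}$ there is a subsequence $\delta'_y$ with $H^{\delta'_y}_y f(\eta)=\cot_K(f(y))$ for a.e.\ $\eta\in\Lambda_y$. Using the Ricci lower bound, choose a family $\{g_{\gamma(t)}\}$ satisfying Condition $(RC)$ along $\gamma_{pz}$ with $\oint g\gs K-\epsilon$. Proposition 3.3 then supplies, for each $y$ between $p$ and $z$, an isometry $T_{yz}:\Lambda_z\to\Lambda_y$ and a subsequence $\delta$ with
$$H^\delta_z f(\xi)\ls l^2\,H^\delta_y f(T_{yz}\xi)+\frac{(l-1)^2}{|yz|}-\frac{l^2+l+1}{3}|yz|\,g_z(\xi).$$
A Fubini-type selection, using that $T_{yz}$ is measure-preserving on the round $(n-2)$-spheres, picks $y$ so that $T_{yz}\xi$ is good at $y$, giving $H^\delta_y f(T_{yz}\xi)=\cot_K(f(y))$. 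Taking the Jacobi-scaled weight $l=\varrho_K'(f(y))/\varrho_K'(f(z))$, a direct $K$-model computation reduces the right-hand side to $\cot_K(f(z))$ modulo a term controlled by $\epsilon$ and $|yz|$; sending $\epsilon\to 0$ and combining with the semi-concavity inherited by $F$ from $f$ yields the $(1-KF)$-concavity along every geodesic in the distributional sense.

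For (b), apply $(1-KF)$-concavity along the geodesic $\beta:[0,L]\to B_p(R)$ joining $\sigma(\tau)$ and $\varsigma(\tau')$. The function $h(s):=F(\beta(s))$ satisfies $h''\ls 1-Kh$ distributionally, while the analogue $\bar h(s):=\varrho_K(|\bar p\,\bar\beta(s)|)$ on the $K$-plane comparison geodesic $\bar\beta$ satisfies $\bar h''=1-K\bar h$ with equality and matches $h$ at the endpoints. The maximum principle for $(h-\bar h)''+K(h-\bar h)\ls 0$---valid in the positivity range enforced by the length assumption $\tau+\tau'+L<2\pi/\sqrt K$ when $K>0$---gives $h\ls\bar h$ on $[0,L]$, i.e.\ $f(\beta(s))\ls\bar f(\bar\beta(s))$. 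Standard law-of-cosines manipulation in the $K$-plane then turns this one-point Toponogov comparison into the monotonicity of $\varphi(\tau,\tau')$ in each of $\tau$ and $\tau'$.

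The hard part will be the Fubini/selection step in the middle paragraph: Proposition 3.3 supplies the isometry $T_{yz}$ and its subsequence only after a test sequence $\{\theta_j\}$ is fixed, whereas Theorem 3.4 at $y$ delivers the equality $H=\cot_K$ only on a full-measure subset of $\Lambda_y$ depending both on $y$ and on the subsequence chosen there. Coordinating these subsequence-dependent choices so that, for the fixed $\xi\in\Lambda_z$, the image $T_{yz}\xi$ lies in the good subset of $\Lambda_y$ for sufficiently many $y\in\gamma_{pz}$ is the technical heart, and relies crucially on the measure-preserving property of $T_{yz}$. A secondary difficulty is absorbing the pointwise deviation of $g_z(\xi)$ from the averaged bound $K-\epsilon$ when applying Proposition 3.3 in a single transverse direction.
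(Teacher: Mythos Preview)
Your route differs from the paper's and has a genuine gap at exactly the point you flag as the ``hard part.''

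The paper does \emph{not} try to establish the transverse Hessian bound at every point by radial propagation. Instead it exploits the regular set $Reg_f$ of the semi-concave function $f=dist_p$, which has full measure. At any $x\in Reg_f$, the Hessian $Hess_xf$ exists as a genuine bilinear form, so $H^\theta_xf(\xi)$ is independent of $\theta$ and continuous in $\xi$. Hence the a.e.-in-$\Lambda_x$, subsequence-dependent equality supplied by the rigidity clause of Theorem~3.4 upgrades automatically to $\overline{Hess}_xf(\xi,\xi)=\cot_K(|px|)\sin^2|\xi,\ua^p_x|$ for \emph{all} $\xi\in\Sigma_x$. This says the deviation function
\[
u(z)=\sup_{\xi\in\Sigma_z}\bigl|\overline{Hess}_zf(\xi,\xi)-\cot_K(|pz|)\sin^2|\xi,\ua^p_z|\bigr|
\]
vanishes a.e.\ in $B_p(R)$. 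The paper then applies Cheeger--Colding's segment inequality (Lemma~5.1) to find, arbitrarily close to any given geodesic $\gamma_{xy}$, a geodesic $\gamma_{x_1y_1}$ along which $u=0$ a.e.; this yields $\wt f''\ls\cot_K(\wt f)(1-\wt f'^2)$ a.e.\ on that geodesic, hence $\wt h''\ls 1-K\wt h$ for $\wt h=\varrho_K\circ\wt f$, and one passes to the limit.

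Your propagation scheme via Proposition~3.3 cannot close. The isometry $T_{yz}$ and the admissible subsequence $\delta$ are produced only \emph{after} a sequence $\{\theta_j\}$ is fixed, whereas the ``good'' full-measure subset of $\Lambda_y$ on which $H^{\delta'_y}_yf=\cot_K$ comes from a further subsequence of the sequence $\theta_y$ supplied by the hypothesis \emph{at} $y$; there is no reason these are compatible. Even conceding the measure-preserving property of $T_{yz}$, a Fubini argument over $y\in\gamma_{pz}$ can at best yield the bound for a.e.\ $\xi\in\Lambda_z$, and at a non-regular $z$ this does not upgrade to all $\xi$ --- so you would still need an approximation step to reach concavity along an arbitrary geodesic. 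That approximation is precisely what the paper's $Reg_f$~+~segment-inequality route supplies from the outset, with no subsequence coordination needed. Your ``secondary difficulty'' about $g_z(\xi)$ versus the averaged bound $K-\epsilon$ is, as you note, harmless in the limit $|yz|\to0$; the real obstruction is the selection step.

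Your argument for part~(b) via the ODE comparison $h''+Kh\ls 1$ against the model $\bar h$ is fine and matches the standard derivation of angle monotonicity from $(1-K\varrho_K\circ f)$-concavity.
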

\begin{proof}This proposition was proved in \cite{ZZ}. We now describe the ideas of its proof. It suffices  to  show that one variable function  $h_p:=\varrho_K\circ f\circ\gamma(s)$ satisfies that
$$h_p''\ls 1-Kh_p$$ for any geodesic $\gamma(s)\subset B_p(R)\backslash\{p\}.$

We consider the function $u: W_p\rightarrow \mathbb{R}^+\cup\{0\}$, \begin{equation}u(z)=\sup_{\xi\in \Sigma_z}\Big|\overline{Hess}_zf(\xi,\xi)-\cot_K(|pz|)\cdot\sin^2(|\xi,\uparrow^p_z|)\Big|.\end{equation}

By the assumption,  we have $u(z)=0,$ almost everywhere in $B_p(R)\cap Reg_f$.

  Since $Reg_f$ has full measure in $B_p(R),$  we conclude that $u\equiv0$ almost everywhere in $B_p(R)$.
By Cheeger-Colding's segment inequality (see \cite{CC-rigi} or Section 5), we can choose geodesic $\gamma_{x_1,y_1}$ such that it closes to $\gamma_{xy}$ arbitrarily and  $u(\gamma_{x_1,y_1}(s))=0$ almost everywhere on $(0,|x_1y_1|)$.
 For the function $\wt f(s)=f\circ \gamma_{x_1,y_1}(s)$, we get
 $$\wt f''(s)\ls \cot_K\wt f(s)\cdot\big(1-\wt{f'}^2(s)\big)$$ for almost everywhere $s\in (0,|x_1y_1|)$.
Thus the function $\wt h(s)=\varrho_K\circ\wt f(s)$ satisfies $$\wt h''(s)\ls 1-K\wt h(s)$$
for almost everywhere $s\in (0,|x_1y_1|)$. Then, by letting $\wt h\to h$, this will prove the desired inequality.
\end{proof}

\subsection{Bishop-Gromov volume comparison theorem}
Let $(X,d,m)$ be a metric measure space. It is proved that  $CD(n,(n-1)K)$ or $MCP(n,(n-1)K)$ implies  Bishop-Gromov relative volume comparison theorem, (see \cite{S-acta,LV-ann,O-mcp}).  In particular, Bishop-Gromov relative volume comparison is also held on an $n-$dimensional Alexandorv space with Ricci curvature $\gs (n-1)K$. That is, the function $$\frac{{\rm vol}B_p(R)}{{\rm vol}\wt B_o(R)}$$ is non-increasing with respect to $R>0$, where $\wt B_o(R)$ is a geodesic ball with radius $R$ in $M^n_{K}$.
The rigidity part  is  discussed in the following result.
\begin{thm} Let $ M$ be an $n-$dimensional Alexandrov space without boundary and $p\in M$. Assume $Ric(M) \gs (n-1)K$ and suppose
 \be{equation*}{{\rm vol}B_p(R)={\rm vol}\wt B_o(R)} for some $R>0.$ Then $B_p(R/2)$ is isometric to $\wt B_o(R/2)$.
\end{thm}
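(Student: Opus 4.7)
My plan is to leverage the rigidity machinery of Theorem 3.4 and Proposition 3.6. First, the Bishop-Gromov monotonicity (which holds under $Ric(M)\geq (n-1)K$) together with the equality at $r=R$ forces
$$\text{vol}\,B_p(r)=\text{vol}\,\tilde B_o(r)\qquad\text{for every }r\in(0,R].$$
Letting $r\to 0^+$ and comparing leading-order asymptotics yields $\text{vol}(\Sigma_p)=\text{vol}(\mathbb S^{n-1})$. Since $\Sigma_p$ is an $(n-1)$-dimensional Alexandrov space with curvature $\geq 1$, the rigidity case of Bishop's volume inequality on such spaces provides an isometry $\iota:\Sigma_p\to\mathbb S^{n-1}$.

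Second, I would upgrade the volume equality to a pointwise Laplacian equality. The Bishop-Gromov inequality is the integrated form of the pointwise domination $A_p(r,\xi)\leq (s_K(r/\sqrt{n-1}))^{n-1}$ of the radial volume density by its model counterpart, so the integral equality at every $r\in(0,R]$ forces pointwise equality of these densities almost everywhere. Combined with the remark following Definition 3.2, according to which $\Delta^\theta f(x)=(\Delta f)^{ac}(x)$ is independent of $\theta\in\mathcal S$ on the full-measure set $W_p\cap Reg_f$, we obtain
$$\Delta^\theta f(x)=(n-1)\cot_K(|px|)\qquad\text{for a.e. }x\in B_p(R)\setminus\{p\}\text{ and every }\theta\in\mathcal S.$$
This is precisely the hypothesis of Proposition 3.6. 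Applying that proposition, $\varrho_K\circ f$ becomes $(1-K\varrho_K\circ f)$-concave in $B_p(R)\setminus\{p\}$, and the $K$-comparison angle $\wa_K\sigma(\tau)p\varsigma(\tau')$ is non-increasing in both variables for any two geodesics $\sigma,\varsigma$ issuing from $p$.

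With this rigidity in hand I exhibit the isometry via the candidate map
$$\Phi:\tilde B_o(R/2)\to B_p(R/2),\qquad (r,\xi)\mapsto\exp_p\bigl(r\,\iota^{-1}(\xi)\bigr),$$
using polar coordinates on the model $M^n_K$ and the identification $\iota$ from the first step. The non-increasing property of $\wa_K$-angles together with the $K$-law of cosines immediately gives that $\Phi$ is 1-Lipschitz. Every $q\in B_p(R/2)$ lies on a minimizing geodesic from $p$, whose initial direction in $\Sigma_p$ pulls back through $\iota$ to a preimage in $\tilde B_o(R/2)$, giving surjectivity. Finally, $\text{vol}\,B_p(R/2)=\text{vol}\,\tilde B_o(R/2)$ shows $\Phi$ is volume-preserving, and a 1-Lipschitz, surjective, volume-preserving map between equidimensional Alexandrov spaces must be an isometry.

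The chief technical obstacle is verifying the hypothesis $B_p(R)\setminus\{p\}\subset W_p$ of Proposition 3.6, since a priori $W_p$ is only known to have full measure. I expect to handle this either by deducing the absence of cut points in $B_p(R)$ directly from the volume equality (using that the model $M^n_K$ has no cut points inside a ball of radius $<\pi/\sqrt K$, together with a volume-accounting argument that any pair of interior cut-locus points would be identified by $\Phi$ and thereby cost volume), or by first applying Proposition 3.6 on the open regular subset, constructing $\Phi$ there, and then extending it to $B_p(R/2)$ by continuity using the 1-Lipschitz bound.
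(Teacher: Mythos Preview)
Your strategy coincides with the paper's: force the Laplacian equality almost everywhere, invoke Proposition~3.6, and then upgrade the resulting distance-nonexpanding (equivalently, distance-expanding in the other direction) map to an isometry by a volume argument. The differences are in the execution of three steps.

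\textbf{Cut locus.} You correctly flag $B_p(R)\setminus\{p\}\subset W_p$ as the main obstacle and leave it open. The paper disposes of it directly: from ${\rm vol}\,\partial B_p(r)={\rm vol}\,\partial\wt B_o(r)$ for all $r<R$, a cut point $q$ at distance $r_1$ would have a neighborhood $U_q$ through which no geodesic from $p$ can be extended by length $(R-r_1)/2$; choosing $r_2\in((R+r_1)/2,R)$ and letting $A(r_1)$ be the set of points on $\partial B_p(r_1)$ that \emph{do} lie on a geodesic to $\partial B_p(r_2)$, one has $U_q\cap A(r_1)=\varnothing$, and then $BG((n-1)K)$ gives the strict inequality
\[
\frac{{\rm vol}\,\partial B_p(r_2)}{{\rm vol}\,\partial\wt B_o(r_2)}\ls\frac{{\rm vol}\,A(r_1)}{{\rm vol}\,\partial\wt B_o(r_1)}<\frac{{\rm vol}\,\partial B_p(r_1)}{{\rm vol}\,\partial\wt B_o(r_1)},
\]
contradicting the sphere-volume equality. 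This is cleaner than either of your suggested workarounds.

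\textbf{Laplacian equality.} You argue via a pointwise radial density $A_p(r,\xi)$, which is delicate to make sense of on an Alexandrov space. The paper instead integrates: with $\phi\gs0$ Lipschitz and supported in $[a,b]\subset(0,R)$, the co-area formula gives $\int\phi(|px|)\,d\Delta dist_p=-\int_a^b\phi'(r)\,{\rm vol}\,\partial B_p(r)\,dr$, while the Laplacian comparison gives the upper bound $\int_a^b\phi(r)\cot_K(r)\,{\rm vol}\,\partial B_p(r)\,dr$; the sphere-volume equality turns this into an equality of measures, hence $\Delta^\theta dist_p(x)=(n-1)\cot_K(|px|)$ for a.e.\ $x$ and every $\theta$.

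\textbf{Isometry.} The paper builds the expanding map $F:B_p(R)\to\wt B_o(R)$ (your $\Phi^{-1}$) and argues by contradiction: if $|xy|<|F(x)F(y)|=2\bar s$ for some $x,y\in B_p(R/2)$, then $F$ sends the complement of $B_x(\bar s)\cup B_y(\bar s)$ into the complement of the two \emph{disjoint} balls $\wt B_{\bar x}(\bar s),\wt B_{\bar y}(\bar s)$, forcing ${\rm vol}(B_x(\bar s)\cup B_y(\bar s))\gs 2\,{\rm vol}\,\wt B_o(\bar s)$; but the midpoint $z$ of $x,y$ has $B_z((\bar s-s)/2)\subset B_x(\bar s)\cap B_y(\bar s)$, so ${\rm vol}(B_x(\bar s)\cup B_y(\bar s))+{\rm vol}\,B_z((\bar s-s)/2)\ls 2\,{\rm vol}\,\wt B_o(\bar s)$, a contradiction. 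Your appeal to ``1-Lipschitz, surjective, volume-preserving $\Rightarrow$ isometry'' is a legitimate alternative, but note you would first need the cut-locus claim to know that $\exp_p$ produces \emph{geodesics} (not merely quasi-geodesics) and that $\Phi$ is well defined.
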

\begin{proof}By the assumption, we have  \begin{equation}{\rm vol}\partial B_p(r)={\rm vol}\partial \wt B_o(r)\end{equation} for all $0<r<R$.

We claim that $B_p(r)\backslash\{p\}\subset W_p$ for all $0<r<R.$ Let us argue by contradiction.

 Suppose that there exists a point $q\in C_p$ with $|pq|=r_1<R$. Then we can find a neighborhood $U_q\ni q$ such that for any point $x\in U_q$, geodesic $\gamma_{px}$ does not extend beyond $x$ with length $\frac{R-r_1}{2}.$ Now take $r_2\in (\frac{R+r_1}{2},R)$ and set
$$A(r_1)=\{y\in \partial B_p(r_1):\ \exists z\in \partial B_p(r_2)\ {\rm such\ that}\ |pz|=|py|+|yz| \},$$ then $U_q\cap A(r_1)=\varnothing.$
By condition $BG((n-1)K)$, (this is implied by $Ric\gs(n-1)K$), we have
$$\frac{{\rm vol}\partial B_p(r_2)}{{\rm vol}\partial\wt B_o(r_2)}\ls \frac{{\rm vol}A(r_1)}{{\rm vol}\partial \wt B_o(r_1)}< \frac{{\rm vol}\partial B_p(r_1)}{{\rm vol}\partial \wt B_o(r_1)}.$$
This contradicts to equation (3.7).

 We now consider $\Delta dist_p$ as a sign-Radon measure. Fix any two numbers $a,b\in (0,R)$ and a nonnegative Lipschitz function $\phi:\R\to\R$ with support in $[a,b]$. By applying co-area formula, we have
 \begin{equation}\int_M \phi(|px|)d\Delta dist_p=-\int_M\ip{\nabla\phi}{\nabla dist_p}d{\rm vol}=-\int_a^b\phi'(r)\cdot{\rm vol}\partial B_p(r)dr.\end{equation}
Laplacian comparison implies
 \begin{equation}\int_M \phi(|px|)d\Delta dist_p\ls\int_M\phi(|px|)\cot_K(|px|)d{\rm vol}=\int_a^b\phi(r)\cot_K(r)\cdot{\rm vol}\partial B_p(r)dr.\end{equation}
 By using (3.7), (3.8) and co-area formula in $M^n_K$, we have $$\int_M \phi(|px|)d\Delta dist_p=\int_M\phi(|px|)\cot_K(|px|)d{\rm vol}.$$
 The combination of this, (3.9) and the arbitrariness of $a,b$ shows that for almost everywhere $x\in Reg_{dist_p}\cap B_p(R)$,  we have $$\Delta^\theta dist_p(x)=(n-1)\cdot\cot_K(|px|)$$ for all $\theta\in \mathcal S.$

Therefore, we can apply Proposition 3.6 to conclude  that there exists an expanding map $F$ from $B_p(R)$ to  $\wt B_o(R)\subset M^n_K$ with $F(p)=o$ and $|F(x)F(p)|=|px|$. Now we can show that it is an isometry from $B_p(R/2)$ to $\wt B_o(R/2)$.

Suppose that there are two points $x,y\in B_p(R/2)$ such that $|xy|<|F(x)F(y)|$. We set $\bar x=F(x)$, $\bar y=F(y)$ and $|xy|=2s$, $|\bar x\bar y|=2\bar s< R.$ Since $F$ is expanding, we have $$F\Big(\big(B_x(\bar s)\cup B_y(\bar s)\big)^c\Big)\subset \Big(\wt B_{\bar x}(\bar s)\cup \wt B_{\bar y}(\bar s)\Big)^c.$$
Then it follows from the  assumption ${\rm vol}B_p(R)={\rm vol}\wt B_o(R)$  that $${\rm vol}\big(B_x(\bar s)\cup B_y(\bar s)\big)\gs {\rm vol}\big(\wt B_{\bar x}(\bar s)\cup\wt  B_{\bar y}(\bar s)\big)=2{\rm vol}\wt B_o(\bar s).$$

On the other hand, letting $z$ be a mid-point of $x$ and $y$, we have
 $${\rm vol}\big(B_x(\bar s)\cup B_y(\bar s)\big)+{\rm vol}B_z(\frac{\bar s-s}{2})\ls {\rm vol}B_x(\bar s)+{\rm vol}B_y(\bar s)\ls 2{\rm vol}\wt B_o(\bar s).$$
This contradicts to $s<\bar s$.
\end{proof}

The next result extends Abresch-Gromoll's excess estimate from Riemannian manifolds to Alexandrov spaces. Let $M$ be an Alexandrov space without boundary. For $q_+, q_-\in M$, the excess function $E$ with respect to $q_+$ and $q_-$ is $$E(x)=|xq_+|+|xq_-|-|q_+q_-|.$$
\be{prop}
{If $M$ satisfies $BG(-\eta)$ $(\eta\gs0)$ and for $x\in M$ with $|xq_+|+|xq_-|\gs L$, $E(x)\ls \epsilon$, then on $B_x(R)$ we have $$E\ls \Phi(\eta,L^{-1},\epsilon\ |\ n,R),$$
where $ \Phi(\eta,L^{-1},\epsilon|n,R)$ is a positive function such that for fixed $n$ and $R$, $\Phi$ tends to zero as $\eta,\epsilon\to0$ and $L\to\infty$.}
One can check that the same proof in Riemannian manifolds (see for example \cite{Cheeger}) also works for Alexandrov spaces.

\section{Geometric consequences}
In this section, we summarize  geometric consequences for Alexandrov spaces under the generalized  Ricci condition.

Let $(X,d,m)$ be  a metric measure space satisfying $CD(n,k)$,   Sturm and Lott-Villani have proved  the following  geometric results:
Brunn-Minkowski inequlity \cite{S-acta}; Bishop-Gromov volume comparison \cite{S-acta,LV-ann}; Bonnet-Myers estimate on diameter \cite{S-acta,LV-ann} and Lichnerowicz estimate on the first eigenvalue of the Laplacian \cite{LV-jfa}.

If $(X,d,m)$  satisfies $MCP(n,k)$, Ohta in \cite{O-mcp} and Sturm in \cite{S-acta} proved Bishop-Gromov volume comparison theorem and Bonnet-Myers  theorem on diameter.

It is obvious that  all of these results also hold  for Alexandrov spaces with the Ricci lower bound condition defined in Section 2.3.
\subsection{Rigidity theorems}
The simplest rigidity is that a smooth $n-$dimensional  Riemannian manifold $M$ with $Ric\gs n-1$ and ${\rm vol}(M)={\rm vol}(\mathbb S^n)$ must be isometric to $\mathbb S^n$. Cheng's maximal diameter theorem asserts that the rigidity still holds  under the assumption ${\rm diam}(M)=\pi={\rm diam}(\mathbb S^n)$ and $Ric\gs n-1$.

Perhaps, the most important rigidity is Cheeger-Gromoll splitting theorem. It states that every Riemannian manifold with nonnegative Ricci curvature and containing a line must split out a factor $\R$ isometrically.

 For Alexandrov spaces, the following topological rigidities results were proved by Ohta \cite{O-product} and Kuwae-Shioya in \cite{KS-lap}.
\begin{thm}Let $M$ be an $n-$dimensional Alexandrov spaces without boundary.\\ $(1) ($Ohta \cite{O-product} and  Kuwae-Shioya  \cite{KS-lap}$)$\indent If $M$ satisfies $BG(n-1)$ and ${\rm diam}(M)=\pi$, then it is homeomorphic to a suspension.\\
$(2)($Kuwae-Shioya  \cite{KS-lap}$)$\indent If $M$ satisfies $BG(0)$ and contains a line, then $M$ is homeomorphic to a direct product
space $N\times \mathbb{R}$ over some topological space $N$.
\end{thm}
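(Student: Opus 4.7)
The plan is to run the classical analytic rigidity arguments of Cheng (for part (1)) and Cheeger--Gromoll (for part (2)) with Theorem 3.1 in place of the smooth Laplacian comparison, and then to upgrade the resulting distance-function rigidity to a topological statement using Perelman's fibration theorem for critical-point-free semi-concave functions on Alexandrov spaces. For part (1), fix a diameter pair $p,q\in M$ with $|pq|=\pi$, set $f:=\mathrm{dist}_p$, $g:=\mathrm{dist}_q$, and consider $u:=\cos f+\cos g$. The chain rule, together with $|\nabla f|\equiv 1$ a.e., the measure-valued bound $\Delta f\leq (n-1)\cot f$ from Theorem 3.1, the analogous bound for $g$, and the non-positivity of the singular parts of $\Delta f$, $\Delta g$ (Petrunin), yields $\Delta u\geq -nu$ as signed Radon measures. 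The triangle inequality forces $u\leq 0$ on $M$, with equality on any minimizing $pq$-segment, so $w:=-u\geq 0$ satisfies $\Delta w\leq nu=-nw\leq 0$ and vanishes at an interior point. By the strong minimum principle for superharmonic functions on the compact Alexandrov space $M$, $w\equiv 0$; i.e.\ $|px|+|qx|=\pi$ on all of $M$. In particular $f$ has no critical points on $M\setminus\{p,q\}$ (from any such $x$, walking along a minimizing $xq$-segment strictly increases $f$), so Perelman's fibration theorem produces a homeomorphism $M\setminus\{p,q\}\cong\Sigma\times(0,\pi)$ for $\Sigma:=f^{-1}(\pi/2)$, and Perelman's conic neighborhood theorem identifies the ends with open cones over $\Sigma$, exhibiting $M\cong S(\Sigma)$.

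For part (2), let $\gamma:\mathbb{R}\to M$ be a line and define the Busemann functions $b_\pm(x):=\lim_{t\to\infty}\bigl(t-|x,\gamma(\pm t)|\bigr)$; convergence and uniform semi-concavity on compact sets follow from monotonicity in $t$ together with the semi-concavity of $-\mathrm{dist}_{\gamma(\pm t)}$ and the excess control provided by Proposition 3.8 under $BG(0)$. Theorem 3.1 applied to $\mathrm{dist}_{\gamma(\pm t)}$ and the limit $t\to\infty$ yield $\Delta b_\pm\leq 0$ in the sense of signed Radon measures. The triangle inequality gives $b_++b_-\leq 0$ with equality on $\gamma$, so the superharmonic function $b_++b_-$ attains its maximum $0$ at an interior point, and the strong maximum principle forces $b_++b_-\equiv 0$. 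Thus $b_+=-b_-$ is simultaneously super- and subharmonic, hence harmonic, with $|\nabla b_+|\equiv 1$ a.e. Its gradient flow $\Phi_t$ is then a one-parameter group of homeomorphisms of $M$, and, setting $N:=b_+^{-1}(0)$, the map $\Psi:M\to N\times\mathbb{R}$ defined by $\Psi(x):=\bigl(\Phi_{-b_+(x)}(x),\,b_+(x)\bigr)$ is the required topological splitting.

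The main obstacle in both parts is justifying the chain rule and the maximum principle in the Alexandrov setting, where $\Delta$ is only a signed Radon measure with non-positive singular part and $f$, $g$, $b_\pm$ are merely semi-concave: one has to reinterpret the above formal computations via Petrunin's DC-calculus together with the Dirichlet form $\mathcal{E}$ of Section 3.1, and verify a strong maximum/minimum principle for $\mathcal{E}$-superharmonic functions in this generality (the weak Harnack inequality of Kuwae--Shioya under $BG(k)$ suffices). The final topological step relies on Perelman's fibration/stability theorem for critical-point-free semi-concave functions on Alexandrov spaces, which Kuwae--Shioya \cite{KS-lap} invoked precisely in order to prove the present theorem.
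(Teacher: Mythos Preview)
The paper does not prove Theorem 4.1: it is quoted, with attribution, from Ohta \cite{O-product} and Kuwae--Shioya \cite{KS-lap}, and the authors then pass to their own stronger \emph{metric} rigidity under the new Ricci condition (Theorem 4.2). So there is no ``paper's own proof'' to compare against; what you have written is essentially a sketch of the Kuwae--Shioya argument that is being cited.

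Your outline is the right strategy, and the last paragraph correctly identifies the genuine work (measure-valued chain rule, strong maximum principle for $\mathcal{E}$-superharmonic functions, Perelman's fibration/stability). Two points deserve more care. First, in part (2) you assert that $b_\pm$ are semi-concave; in fact $t-|x,\gamma(\pm t)|$ is only semi-\emph{convex} (since $dist_{\gamma(\pm t)}$ is semi-concave), so a priori $b_\pm$ inherit only a lower Hessian bound. One needs $b_++b_-\equiv 0$ \emph{first} (via the distributional Laplacian comparison and the maximum principle, which do not require semi-concavity) to conclude that $b_+=-b_-$ is simultaneously semi-convex and semi-concave; only then are gradient curves available in both time directions and $\Phi_t$ becomes a genuine flow. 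Second, in part (1) the chain rule step $\Delta(\cos f)\geq -n\cos f$ needs the observation that the singular part of $\Delta f$ is non-positive \emph{and} is multiplied by $-\sin f\leq 0$, so it contributes with the correct sign; you should also note that Theorem 3.1 is stated on $M\setminus(\{p\}\cup\mathcal S_\epsilon)$, and one must pass to the limit $\epsilon\to 0$ (or use the canonical Laplacian directly) to get the inequality as measures on all of $M\setminus\{p\}$.
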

Actually, in \cite{KS-wei}, Kuwae-Shioya  obtained a more general
weighted measure version of the second assertion in the above theorem.

 In \cite{ZZ}, under the corresponding Ricci curvature conditions, the authors obtained the following metric rigidity results:
\begin{thm} $($Zhang-Zhu \cite{ZZ}$)$\indent
 Let $M$ be  an $n-$dimensional Alexandrov space without boundary.\\
 \indent$(i)$\indent {\rm (Maximal diamter theorem)} If $Ric(M)\gs n-1$  and  $diam(M)=\pi$ , then $M$ is isometric to  suspension $[0,\pi]\times_{\sin}N,$ where $N$ is an Alexandrov space with curvature $\gs1.$\\
 \indent$(ii)$\indent {\rm (Splitting theorem)} If $M$ has nonnegative Ricci curvature and contains a line, then $M$ is isometric to  direct metric product $\R\times N,$ where $N$ is an Alexandrov space with  nonnegative Ricci curvature.
\end{thm}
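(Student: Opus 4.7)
The plan is to adapt Cheng's maximal diameter argument and the Cheeger--Gromoll splitting proof to the Alexandrov setting, leveraging the Laplacian comparison theorem (Theorems 3.1 and 3.4) together with its rigidity consequence (Proposition 3.6) and the strong maximum principle for the canonical Dirichlet form on $M$.

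For part (i), I would choose $p_+,p_-\in M$ with $|p_+p_-|=\pi$ and set $f_\pm:=\mathrm{dist}_{p_\pm}$. The triangle inequality gives $f_++f_-\geq\pi$ and $|f_+-f_-|\leq\pi$, so the sum-to-product identity $\cos f_++\cos f_-=2\cos\tfrac{f_++f_-}{2}\cos\tfrac{f_+-f_-}{2}$ yields $u:=\cos f_++\cos f_-\leq 0$. Applying Theorem 3.1 (with $k=1$, obtained from $\mathrm{Ric}\geq n-1$ via the chain $\mathrm{Ric}\geq(n-1)K\Rightarrow BG((n-1)K)$) together with the chain rule $\Delta\cos f=-\sin f\,\Delta f-\cos f\,|\nabla f|^{2}$ and $|\nabla f|=1$ a.e.\ on $W_{p_\pm}$ gives $\Delta\cos f_\pm+n\cos f_\pm\geq 0$ as signed Radon measures, hence $\Delta u+nu\geq 0$. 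Since $u\leq 0$ attains the value $0$ on the minimizing $p_+p_-$ segment, the strong maximum principle forces $u\equiv 0$, i.e.\ $f_++f_-\equiv\pi$. Then the Laplacian comparisons for $f_\pm$ must be equalities a.e., so Proposition 3.6 applies and shows that $1-\cos f_\pm$ is $\cos f_\pm$-concave in $M\setminus\{p_+,p_-\}$ and that the spherical comparison angle $\wa_1\sigma(\tau)p_+\varsigma(\tau')$ is non-increasing in $\tau,\tau'$ for all geodesics $\sigma,\varsigma$ emanating from $p_+$. Arguing as in the proof of Theorem 3.7 one shows $B_{p_+}(\pi)\setminus\{p_+\}\subset W_{p_+}$, so $\Psi(\tau,\xi):=\exp_{p_+}(\tau\xi)$ defines a continuous surjection $[0,\pi]\times\Sigma_{p_+}/\!\sim\,\to M$. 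The monotonicity of $\wa_1$ combined with the symmetric statement at $p_-$ (each geodesic from $p_+$ extends to a minimizer to $p_-$ since $f_++f_-\equiv\pi$) pins $|y_1y_2|$ to the spherical law of cosines in terms of the directions at $p_+$, so $\Psi$ is an isometry; the factor $\Sigma_{p_+}$ has curvature $\geq 1$ since it is the space of directions of an Alexandrov space.

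For part (ii), given a line $\gamma:\mathbb{R}\to M$ I define the Busemann functions $b^\pm(x):=\lim_{t\to\infty}(|\gamma(\pm t)x|-t)$, which are $1$-Lipschitz and satisfy $b^++b^-\geq 0$ with equality on $\gamma$. The Laplacian comparison with $K=0$ yields $\Delta\mathrm{dist}_{\gamma(\pm t)}\leq(n-1)/\mathrm{dist}_{\gamma(\pm t)}$, and letting $t\to\infty$ shows $\Delta b^\pm\leq 0$ distributionally. Thus $b^++b^-$ is a nonnegative superharmonic function vanishing on $\gamma$, and the strong maximum principle forces $b^++b^-\equiv 0$, so $b:=b^+$ is harmonic on $M$. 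The Busemann-function analogue of Proposition 3.6 (with $K=0$ and $\varrho_0(v)=v^2/2$), applied in the equality case, then forces the upper Hessian of $b$ to vanish identically and $|\nabla b|\equiv 1$, so $b$ is affine. The gradient flow of $b$ produces a one-parameter family of isometries of $M$, which combined with the topological splitting of Kuwae--Shioya (Theorem 4.1(2)) upgrades to the metric splitting $M\cong\mathbb{R}\times N$ with $N:=b^{-1}(0)$. Finally, $N$ inherits nonnegative Ricci curvature because Condition $(RC)$ decomposes additively across an isometric product.

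I expect the principal obstacles to be the following. First, one must verify the strong maximum principle for signed Radon measure inequalities on $M$; this is available because Petrunin's bound $(\Delta f)^{\mathrm{sing}}\leq 0$ ensures the singular part does not break superharmonicity, reducing the question to standard elliptic theory on $M\setminus\mathcal{S}_\epsilon$ via the DC-representation of the Laplacian. Second, Proposition 3.6 is stated only for distance functions, whereas part (ii) requires its Busemann analogue; the transfer demands a uniform limit argument exploiting the decay $(n-1)/\mathrm{dist}_{\gamma(\pm t)}\to 0$ together with the Abresch--Gromoll excess estimate (Proposition 3.8) to pass to the limit in the rigidity statement along a suitable subsequence in $\mathcal S$.
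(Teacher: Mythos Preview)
Your strategy is sound and largely parallels the paper's, with a genuine difference in the route for part (i) and in how the crux of part (ii) is resolved.

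For part (i), you reach $f_++f_-\equiv\pi$ via the strong maximum principle applied to $u=\cos f_++\cos f_-$ (Cheng's analytic route); the paper instead uses a volume argument: disjointness of $B_p(r)$ and $B_q(\pi-r)$ together with Bishop--Gromov forces ${\rm vol}B_p(r)/{\rm vol}\widetilde B(r)={\rm vol}(M)/{\rm vol}(\mathbb S^n)$ for all $r$, which yields equality in the Laplacian comparison directly. Both routes then feed into Proposition 3.6 to obtain the $\cos f$-concavity of $-\cos f$ (and likewise for $\bar f$), after which the derivation of equation (4.1) and the suspension structure proceed identically. Your route requires justifying the strong maximum principle for the measure inequality on $M$; the paper's route avoids this but needs the co-area computation. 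Both are valid.

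For part (ii), your outline matches the paper's skeleton (Busemann functions, $b^++b^-\equiv 0$ via the maximum principle, affineness, splitting), and you correctly identify the Busemann analogue of Proposition 3.6 as the crux. The paper resolves this crux differently from the direct limit/rigidity argument you propose: it first proves that $b_\pm$ are \emph{semi-concave} (a nontrivial step, needed so that the pointwise Laplacian of Definition 3.2 applies to them), and then proves that $b_\pm$ are \emph{concave}. Affineness then drops out immediately from concavity of both $b_+$ and $b_-$ combined with $b_++b_-=0$, without ever needing an equality-case statement forcing $\overline{Hess}\,b=0$ directly. This ``semi-concave, then concave'' decomposition is the paper's substantive technical content and is cleaner than passing to a limit in Proposition 3.6 along rays. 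Finally, the metric splitting is deduced from the affine function via Mashiko's theorem, not through a gradient-flow construction layered on the Kuwae--Shioya topological splitting; once $b_+$ is affine, Mashiko's result gives $M\cong\mathbb R\times N$ directly.
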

In \cite{CC-rigi,CC-ric}, Cheeger and Colding studied  the limiting spaces of  smooth Riemannian manifolds under Gromov-Hausdorff convergence. Among other things in \cite{CC-rigi}, they extended Cheng's maximal diameter theorem and Cheeger-Gromoll's splitting theorem to the limiting spaces. One of challenge problem in Alexandrov space theory is whether any Alexandrov space can be approximated by smooth Riemannian manifolds via Gromov-Hausdorff topology\footnote{In a private conversation with Y. Burago, the second author learnt that the experts in the field guess the negative answer to the problem.}. The above rigidity theorem might shed the light to answer this challenge problem.

A sketch of the proof of the above metric rigidity theorem is given as follows.

To illustrate the proof for the maximal diameter theorem, let us
take two points $p,q\in M$ with $|pq|=\pi.$ One can check $${\rm vol}B_p(r)+{\rm vol}B_q(\pi-r)={\rm vol}(M)$$for all $0<r<\pi.$ Further, by using Bishop-Gromov volume comparison theorem, we have $$\frac{{\rm vol}B_p(r)
}{{\rm vol}\wt B(r)}=\frac{{\rm vol}(M)}{{\rm vol}(\mathbb S^n)}$$ for all $0<r<\pi$, where $\wt B(r)\subset \mathbb S^n$ is a geodesic ball with radius $r$.

Set $f=dist_p$ and $\bar f=dist_q$. By applying   an argument similar in Proposition 3.6, we show that $-\cos f$ is $\cos f-$concave  and  $-\cos \bar f$ is $\cos \bar f-$concave in $W_q=W_p=M\backslash\{p,q\}$.
Thus by combining  $\cos f=-\cos\bar f$,  we get \begin{equation} (-\cos f\circ\sigma)''(s)=
\cos f\circ\sigma(s)
\end{equation} for any geodesic $\sigma\in M\backslash\{p,q\}$.

Denote by $$M^+=\big\{x\in M:\ f(x)\ls \pi/2\big\},\qquad  M^-=\big\{x\in M:\ f(x)\gs \pi/2\big\}$$
and $N=M^+\cap M^-=\{x\in M:\ f(x)= \pi/2\}.$

Using Proposition 3.6 and the equation (4.1), by a direct calculation, we can show that $M$ is isometric to suspension $[0,\pi]\times_{\sin}N$. Moreover, $N$ is isometric to $\Sigma_p$, the space of directions at $p$.

\begin{cor}
 Let $M$ be  an $n-$dimensional Alexandrov space with $Ric(M)\gs n-1$. If $rad(M)=\pi$, then $M$ is isometric to the sphere $\mathbb S^n$ with the standard metric.
\end{cor}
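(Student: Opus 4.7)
The plan is to combine the maximal diameter theorem (Theorem 4.2(i)) with an induction on the dimension $n$. Bonnet--Myers under $Ric(M)\gs n-1$ gives $\mathrm{diam}(M)\ls \pi$, and the elementary inequality $rad(M)\ls \mathrm{diam}(M)$ together with the hypothesis $rad(M)=\pi$ forces $\mathrm{diam}(M)=\pi$. Fix a point $p\in M$ realizing the diameter; Theorem 4.2(i) then identifies
\[
M\;\cong\;[0,\pi]\times_{\sin}\Sigma_p,
\]
where $\Sigma_p$ is an $(n{-}1)$-dimensional Alexandrov space of curvature $\gs 1$ (the sketch of Theorem 4.2 in the excerpt explicitly identifies the base of the suspension with $\Sigma_p$).

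The main step is to propagate the radius condition down to $\Sigma_p$. Identify $\Sigma_p$ with the equator $\{x\in M:dist_p(x)=\pi/2\}$, and let $\eta=(\pi/2,\eta_0)$ be any equatorial point. The hypothesis $rad(M)=\pi$ produces an antipode $\eta^*=(t^*,\xi^*)\in M$ with $|\eta\eta^*|_M=\pi$. Applying the suspension cosine law gives
\[
-1\;=\;\cos|\eta\eta^*|_M\;=\;\cos\tfrac{\pi}{2}\cos t^*+\sin\tfrac{\pi}{2}\sin t^*\cos|\eta_0\xi^*|_{\Sigma_p}\;=\;\sin t^*\,\cos|\eta_0\xi^*|_{\Sigma_p},
\]
and since $\sin t^*\in[0,1]$ and $\cos|\eta_0\xi^*|_{\Sigma_p}\in[-1,1]$, this forces $t^*=\pi/2$ and $|\eta_0\xi^*|_{\Sigma_p}=\pi$. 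Thus $\eta^*$ lies in the equator and is a genuine antipode of $\eta$ inside $\Sigma_p$, proving $rad(\Sigma_p)=\pi$.

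By Remark 2.7(ii) the curvature lower bound $\Sigma_p\gs 1$ yields $Ric(\Sigma_p)\gs (n-1)-1=n-2$, so $\Sigma_p$ satisfies the hypotheses of the corollary in dimension $n-1$. The induction then provides $\Sigma_p\cong \mathbb{S}^{n-1}$ with its round metric, and feeding this back into the suspension identification produces
\[
M\;\cong\;[0,\pi]\times_{\sin}\mathbb{S}^{n-1}\;=\;\mathbb{S}^n.
\]
The base case $n=1$ is immediate: a closed boundaryless $1$-dimensional geodesic space with $rad=\pi$ is a round circle of length $2\pi$. The only genuine technical point is the suspension bookkeeping that locates the antipode of an equatorial point inside the equator; once that is observed, the inductive reduction collapses the problem to successively lower-dimensional spheres and terminates at $\mathbb{S}^n$.
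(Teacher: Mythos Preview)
Your argument is correct but follows a genuinely different route from the paper. The paper does not induct on dimension. Instead it exploits the full strength of the radius hypothesis: since $rad(M)=\pi$, \emph{every} point $p\in M$ has an antipode at distance $\pi$, so the argument underlying the maximal diameter theorem (specifically the conclusion that $-\cos dist_p$ is $\cos dist_p$-concave on $B_p(\pi)\backslash\{p\}$, cf.~Proposition~3.6) applies at every $p$. That concavity for \emph{all} distance functions is exactly the Alexandrov condition curvature $\gs 1$, so the Ricci lower bound is upgraded to a sectional lower bound; the paper then invokes the known rigidity that an $n$-dimensional Alexandrov space with curvature $\gs 1$ and $rad=\pi$ is the round $\mathbb S^n$.

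The trade-off: your approach is self-contained once Theorem~4.2(i) is in hand---you never need the separate curvature-$\gs 1$ rigidity result, and the suspension cosine-law step pushing $rad=\pi$ down to the equator is clean. The paper's approach, on the other hand, extracts a stronger intermediate conclusion (the sectional bound $\gs 1$ on all of $M$), which is of independent interest and avoids any inductive bookkeeping. Both proofs ultimately hinge on the same rigidity machinery behind the maximal diameter theorem; they just package the descent to $\mathbb S^n$ differently.
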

\begin{proof} For any point $p\in M$, there exists a point $q$ such that $|pq|=\pi$. From the proof of Maximal diameter theorem, we have that $-\cos dist_p$ is $\ \cos dist_p-$concave in $B_p(\pi)\backslash\{p\}$. It follows from the arbitrariness of $p$  that $M$ has curvature $\gs1$. It is well-known  that  an $n-$dimensional Alexandrov space with curvature $\gs1$ and $rad=\pi$ must be isometric to  the sphere $\mathbb S^n$ with the standard metric.
\end{proof}

 The well known Obata theorem asserts that if the first eigenvalue $\lambda_1$ of the Laplacian on an $n-$dimensional Riemannian manifold with $Ric\gs n-1$ is equal to $n$, then the diameter of the Riemannian manifold is $\pi$, and hence it is a standard sphere.

  In \cite{LV-jfa}, Lott-Villani proved that a metric measure space $X$  has $\lambda_1(X)\gs nk/(n-1)$ provided $M$ satisfies $CD(n,k)$. Hence,  an $n-$dimensional  Alexandrov space $M$ with $Ric\gs n-1$ must satisfy $\lambda_1(M)\gs n$. Note that the spherical suspensions over  Alexandrov spaces with curvature $\gs 1$ satisfy $\lambda_1(M)= n$. So a problem arises whether Obata's theorem holds true or not for Alexandrov spaces. Precisely,
  \begin{prob} Let $M$ be an $n-$dimensional  Alexandrov space (without boundary) with  $Ric(M)\gs n-1$ and  $\lambda_1(M)= n$. Is its diamter equal to $\pi$?\end{prob}If the answer is ``yes'', then by the above Maximal diameter theorem,  $M$ must be a suspension.
 \begin{prob} Can one prove Levy-Gromov  isoperimetric inequality for Alexandrov spaces under the Ricci curvature condition $Ric(M)\gs n-1$. Precisely:

  Let $M$ be an $n-$dimensional  Alexandrov space (without boundary) with  $Ric(M)\gs n-1$. Set  a surface  $\sigma_\alpha$
which divides the volume of $M$ in ratio $\alpha$. Let $s_\alpha$ be a geodesic
sphere in $\mathbb{S}^n$ which divides the volume of $\mathbb{S}^n$ in the same ratio $\alpha$. Can one prove $$\frac{{\rm vol}(\sigma_\alpha)}{{\rm vol}(M)}\gs \frac{{\rm vol}(s_\alpha)}{{\rm vol}(\mathbb{S}^n)}?$$\end{prob}
In \cite{Pet-har}, Petrunin sketched a proof to  Levy-Gromov isoperimetric inequality  for Alexandrov spaces with curvature $\gs1$.\\

To state the idea of the proof for the above splitting theorem, let us review what is the proof  in smooth case.

 Let $M$ be a smooth Riemannian manifold with nonnegative Ricci curvature and fix a line $\gamma(t)$ in $M$.  We set $\gamma_+=\gamma|_{[0,+\infty)}$, $\gamma_-=\gamma|_{(-\infty,0]}$. Let $b_+$ and $b_-$ be the Busemann functions for rays $\gamma_+$ and $\gamma_-$, respectively.

 By Laplacian comparison theorem, $b_+$ and $b_-$ are subharmonic on $M$. It follows from  the maximum principle that $b_++b_-=0$ on $M$. Thus thery are harmonic. Elliptic equation regularity theory tells us that they are smooth. The important step is to ues  Bochner formula to conclude that both $\nabla b_+$ and $\nabla b_-$ are parallel. Consequently, the splitting theorem follows directly from de Rham decomposition theorem.

  If an $n-$dimensional Alexandrov spaces $M$ satisfies $BG(0)$, then the distribution Laplacian comparison theorem and the maximum principle  still hold. Kuwae-Shioya \cite{KS-lap} proved that the Busemann functions  $b_+(x)$ and $b_-(x)$ are harmonic, when $M$ contains a line and satisfies $BG(0)$. The main difficulty is that neither smoothness of harmonic functions nor Bochner formula is available in Alexandrov spaces. Indeed, one does not expect the Bochner formula holds on Alexandrov spaces.

In our proof for the splitting theorem, the key step is to prove $b_+$ is an affine function, i.e., $b_+\circ\sigma(t)$ is linear for any geodesic $\sigma(t)$ in $M$ (in smooth case, this is $\nabla\nabla b_+=0$). Firstly, we prove  that the Busemann functions $b_+$ and $b_-$ are semi-concave. This fact allows us to define a pointwise Laplacian for them. Next we prove  that $b_+$ and $b_-$ are concave. Then the combination of the concavity of $b_+, b_-$ and the fact that $b_+(x)+b_-(x)=0$ imply that  $b_+$ is an affine function  $M$.

 Finally, we adapt an argument of Mashiko \cite{Mashi} to prove that $M$ is isometric to a direct product $\mathbb{R}\times N$ over an Alexandrov space $N$. Furthermore, we can prove that $N$ has nonnegative Ricci curvature.

As an consequence of  the splitting theorem, we get a rigidity for $n-$dimensional torus:\be{cor}{ Any  Alexandrov metric on $\mathbb T^n$ with nonnegative Ricci curvature must be a flat metric.}
\begin{proof} Let $d$ be an Alexandrov metric on $\mathbb T^n$ with nonnegative Ricci curvature. The topological product space $M^n=\mathbb T^{n-1}\times\R$ covers $(\mathbb T^n,d)$, and hence has a lifted metric $\wt d$ on $M^n$. Note that $M^n$ has two ends. Thus, it contains a line. By the above splitting theorem, $\wt d$ must split isometrically out an Alexandrov metric $\widehat{d}$ on $\mathbb T^{n-1}$, it still has nonnegative Ricci curvature. Therefore, the desired result  follows by induction on  dimension.
\end{proof}
\subsection{Fundamental group}
Any small spherical neighborhood of a point in an  Alexandrov space is homeomorphic to an open cone \cite{Per-morse}, and hence it is locally contractible and exists a universal cover.
Since the lower  Ricci curvature bounds for Alexandrov spaces is local (c.f. Proposition 2.8), it implies that its universal cover has the same lower bound for the Ricci curvature.

 In \cite{BS}, Bacher-Sturm proved that ``local $CD(n,k)$'' implies ``globe $CD(n+1,k)$'' for metric measure spaces (see  \cite{BS}).  Hence one can get some estimates for fundamental group and Betti number on Alexandrov spaces under proper condition $CD(n,k)$, which are similar (but, weaker)
as Corollary 4.7 and 4.8  below.

 From the above splitting theorem, we can apply the same proofs as in Riemannian manifold case (see, for example, Section 3.5 in Chapter 9 of \cite{Petersen} and \cite{A})  to get the following structure theorem for fundamental group of  Alexandrov space with nonnegative Ricci curvature and a theorem of Milnor type.
\begin{cor}Let $M$ be a compact $n-$dimensial Alexandrov space with nonnegative Ricci curvature. $\partial M=\varnothing.$ Then its fundamental group has a finite index Bieberbach subgroup.
\end{cor}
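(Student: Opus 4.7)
The plan is to carry over the classical Cheeger--Gromoll strategy to the Alexandrov setting, using Theorem 4.2(ii) in place of the Riemannian splitting theorem, and then invoking Bieberbach's first theorem on crystallographic groups.

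First I would pass to the universal cover. Since $M$ is locally contractible (every point has a conical neighborhood by Perelman's theorem, as noted at the start of this subsection), the universal cover $\pi\colon\widetilde M\to M$ exists and $\Gamma:=\pi_1(M)$ acts on $\widetilde M$ freely, properly discontinuously, cocompactly, and by isometries. Because the lower Ricci bound is a local condition (Proposition 2.8), $\widetilde M$ is an $n$-dimensional Alexandrov space with $\mathrm{Ric}\gs 0$.

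Next I would iterate Theorem 4.2(ii). Each time the current factor contains a line, the splitting theorem peels off an isometric $\mathbb{R}$-factor; dimension drops at each step, so after at most $n$ iterations one obtains an isometric decomposition
\[
\widetilde M \;=\; \mathbb{R}^k \times N, \qquad 0\leqslant k\leqslant n,
\]
where $N$ is an Alexandrov space with $\mathrm{Ric}\gs 0$ containing no line. A de Rham-type uniqueness argument (the Euclidean factor is canonically the set of points lying on a line) shows that the splitting is preserved by every isometry of $\widetilde M$, perhaps after replacing $\Gamma$ by a finite index subgroup $\Gamma_0$ that fixes the factorization. Hence each $\gamma\in\Gamma_0$ decomposes as $\gamma=(\gamma',\gamma'')\in \mathrm{Iso}(\mathbb{R}^k)\times\mathrm{Iso}(N)$.

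Let $p\colon\Gamma_0\to\mathrm{Iso}(\mathbb{R}^k)$ be the first-factor projection, set $\Gamma':=p(\Gamma_0)$, and $K:=\ker p$. I would show that $N$ is compact as follows: if $N$ were noncompact, pick an unbounded sequence of minimizing segments in $N$ and translate their midpoints back to a fundamental domain using cocompactness of the $\Gamma_0$-action; a subsequential limit, together with the Abresch--Gromoll-type excess estimate (Proposition 3.9), yields a line in $N$, contradicting line-freeness. With $N$ compact, $K$ acts freely and properly discontinuously on a compact space and is therefore finite; the induced action of $\Gamma'$ on $\mathbb{R}^k$ is then properly discontinuous and cocompact, so $\Gamma'$ is a crystallographic group.

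Finally, by Bieberbach's first theorem, $\Gamma'$ contains a finite index normal subgroup $A\simeq\mathbb{Z}^k$ acting by pure translations. Its preimage $\widehat\Gamma\leqslant\Gamma_0$ has finite index in $\Gamma$ and fits into a short exact sequence $1\to K\to\widehat\Gamma\to A\to 1$ with $K$ finite and $A$ free abelian. Passing to a further finite index, torsion-free subgroup (for instance, $\widehat\Gamma\cap \ker(\widehat\Gamma\to \mathrm{Aut}(K))$ intersected with a congruence subgroup of $A$) yields a torsion-free crystallographic group, i.e., a Bieberbach group, of finite index in $\Gamma$.

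The main obstacle is the compactness of $N$: in the Riemannian case this follows from additional regularity/soul-theoretic tools that are not available here. The argument must be entirely synthetic, extracting the forbidden line in $N$ from an unbounded sequence of segments using only the cocompact group action together with the Alexandrov excess bound of Proposition 3.9. Once this is in place, the remainder is a routine transcription of Cheeger--Gromoll and the Bieberbach structure theorem.
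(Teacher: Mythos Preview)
Your proposal is correct and follows exactly the approach the paper indicates: the paper gives no detailed proof of this corollary but simply states that, once the splitting theorem (Theorem~4.2(ii)) is available, one can ``apply the same proofs as in Riemannian manifold case'' (citing \cite{Petersen} and \cite{A}), and your argument is a faithful reconstruction of that Cheeger--Gromoll scheme. Two minor remarks: the passage to a finite-index subgroup to preserve the factorization is unnecessary since the maximal Euclidean factor $\mathbb{R}^k$ is canonical (characterized by the lines through a point) and hence automatically invariant under isometries, and the compactness of $N$ does not really require Proposition~3.9---the standard argument (translating midpoints of long segments by the cocompact action and passing to a limit geodesic) already works in any proper geodesic space.
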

\be{cor}{Let $M$ be an $n-$dimensial Alexandrov space with nonnegative Ricci curvature. $\partial M=\varnothing.$ Then any finitely generated subgroup of $\pi_1(M)$ has polynomial growth of degree $\ls n$.

Moreover, if some finitely generated subgroup of $\pi_1(M)$ has polynomial growth of degree $= n$, then $M$ is compact and flat.}

By using  Bishop-Gromov volume comparison theorem  on its universal cover, the same proof as in Riemannian manifold case (see, for example, page 275-276 in \cite{Petersen}) give the
 following estimates on  the first Betti number.
\begin{cor}
   Let $M$ be an $n-$dimensional  Alexandrov space with  $\partial M=\varnothing$. If $Ric(M)\gs(n-1)K$ and diameter of $M\ls D$, then   $$b_1(M)\ls C(n, K^2\cdot D)$$ for some function $C(n, K^2\cdot D)$.

    Moreover, there exists a constants $\kappa(n)>0$ such that if $K^2\cdot D\gs -\kappa(n),$ then $b_1(M)\ls n.$
\end{cor}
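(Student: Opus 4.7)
The plan is to transfer Gromov's classical argument bounding $b_1$ via Bishop--Gromov volume comparison on an abelian cover. Since Alexandrov spaces are locally contractible (small balls are cones over the space of directions) and the lower Ricci bound is a local condition by Proposition 2.8, the universal cover $\wt M$ of $M$ exists as an Alexandrov space with $Ric(\wt M)\gs (n-1)K$. Let $\phi:\pi_1(M)\twoheadrightarrow H:=H_1(M,\mathbb{Z})/\mathrm{Tor}\cong \mathbb{Z}^{b_1(M)}$ be the abelianization modulo torsion, and set $\bar M:=\wt M/\ker\phi$; this is an intermediate Galois cover on which $H$ acts freely by deck transformations with $\bar M/H=M$. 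Again by Proposition 2.8 we have $Ric(\bar M)\gs (n-1)K$, so the Bishop--Gromov comparison of Section 3.2 is applicable on $\bar M$.

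Fix $\bar p\in\bar M$, put $\|h\|:=|\bar p,h\bar p|$ for $h\in H$, and write $N(r):=\#\{h\in H:\|h\|\ls r\}$. Because a fundamental domain for the $H$-action based at $\bar p$ is isometric to $M$ and thus contained in the closed ball $B_{\bar p}(D)$,
\be{equation*}{N(r-D)\cdot{\rm vol}(M)\ls {\rm vol}\,B_{\bar p}(r)\ls N(r+D)\cdot{\rm vol}(M)\qquad (r\gs D),}
and Bishop--Gromov upstairs bounds ${\rm vol}\,B_{\bar p}(r)$ in terms of $n$, $K$, $r$ and ${\rm vol}(M)$. Next, build a short basis $h_1,\dots,h_{b_1}\in H$ by iteratively selecting $h_k$ of minimal norm subject to $\R$-linear independence from $h_1,\dots,h_{k-1}$. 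Since the $h_k$ are $\R$-independent, the $(2m+1)^{b_1}$ integer combinations $\sum a_ih_i$ with $|a_i|\ls m$ give distinct elements of $H$, all of norm $\ls b_1 m\|h_{b_1}\|$. Applied with $m=1$, the combination with the upper bound on $N$ forces an a priori estimate $\|h_{b_1}\|\ls L(n,K^2\cdot D)\cdot D$ by the standard short-basis argument (cf.~p.~275 of \cite{Petersen}); feeding this back and letting $m\to\infty$, comparing the polynomial growth rate of $N$ with that of ${\rm vol}\,\wt B_o(r)$ in $M^n_K$ yields $b_1\ls C(n,K^2\cdot D)$.

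For the sharp statement, when $K^2\cdot D$ is sufficiently close to zero the Bishop--Gromov model is uniformly close to the Euclidean one on scales $\sim D$, so ${\rm vol}\,B_{\bar p}(r)$ grows at most like $r^{n+\varepsilon}$, while the lattice of translates forces $N(r)\gs c_n(r/\|h_{b_1}\|)^{b_1}\gs c_n'(r/D)^{b_1}$; once $\varepsilon<1$, which happens for $K^2\cdot D\gs -\kappa(n)$ with $\kappa(n)$ small enough, comparing the two growth rates as $r\to\infty$ forces $b_1\ls n$. The main subtlety is not analytic but topological: one must verify that the universal and intermediate Galois covers of an Alexandrov space inherit the Alexandrov structure together with the local Ricci bound, which is immediate from Proposition 2.8 and the locally isometric nature of the covering projection. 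Once this is granted, every remaining step is combinatorial or uses only Bishop--Gromov, and so transfers verbatim from the Riemannian argument of \cite{Petersen}.
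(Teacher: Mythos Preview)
Your proposal is correct and matches the paper's approach exactly: the paper gives no proof beyond the sentence preceding the corollary, which says to apply Bishop--Gromov volume comparison on the universal cover and follow the Riemannian argument on pp.~275--276 of \cite{Petersen}, the key Alexandrov-specific input being (as you identify) that the local Ricci bound lifts to covers via Proposition~2.8. One small caution on your sketch of the sharp bound $b_1\ls n$: for $K<0$ the model volume in $M^n_K$ grows exponentially, so the comparison ``as $r\to\infty$'' you describe cannot close; the actual argument in \cite{Petersen} is a packing argument at the fixed scale $r\sim D$ using the short basis (each $\|h_i\|\ls 2D$ and the midpoints are pairwise separated), which is what your reference to that source ultimately invokes.
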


\section{Analytic consequences}
In this section, we summarize  basic analytic consequences on Alexandrov spaces, including Poincar\'e inequality, Sobolev inequality and Lipschitz continuity  of harmonic functions and heat kernel.  Most analytic properties are obtained under condition  $BG(k)$ (or $MCP(n,k)$).
\subsection{Poincar\'e, Sobolev and mean value inequality}

 Kuwae,  Machigashira and Shioya   proved a
Poincar\'e inequality with a constant depending on the volume (see Theorem 7.2 in \cite{KMS}).
A. Ranjbar-Motlagh in \cite{Ran} proved a
Poincar\'e inequality under a measure contraction property. Lott and Villani in \cite{LV-jfa} proved a Poincar\'e inequality for  metric measure spaces with a ``democratic'' condition (see \cite{LV-jfa} for the definition of the ``democratic'' condition).

Cheeger-Colding in \cite{CC-rigi} proved the following segment inequality for Riemannian manifolds with lower Ricci bounds, which was later extended to non-branching metric measure spaces satisfying $MCP(n,k)$ by M. von Renesse in \cite {Ren}. In particular, the  inequality holds  for Alexandrov spaces with condition $BG(k)$.
\begin{lem}{\bf (Cheeger-Colding's segment\ inequality)}
Let $M$ be an $n-$dimensional Alexandrov space satisfying $BG(k)$. Given a nonnegative measurable function $g$ on $M$, set $$\mathcal F_g(x,y)=\inf_\gamma\int_0^lg\circ\gamma(s)ds,$$ where the inf is taken over all minimal geodesics $\gamma$ from $x$ to $y$. Then for any two measurable sets $A_1$ and $A_2$ with $A_1,A_2\subset B_p(r)$, there holds \be{equation}{
\int_{A_1}\int_{A_2}\mathcal F_g(x,y)dxdy\ls c(n,R)\cdot r\cdot({\rm vol}(A_1)+{\rm vol}(A_2))\cdot\int_{B_p(2R)}g(z)dz,}where $$c(n,R)=2\sup_{0<s/2\ls u\ls s\ls R}\frac{{\rm vol}(\partial \wt B(s))}{{\rm vol(\partial} \wt B(u))}$$ and $\wt B(s)$ is a geodesic ball with radius $s$ in model space $M^n_{k/(n-1)}.$
\end{lem}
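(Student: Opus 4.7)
The plan is to follow Cheeger--Colding's classical approach: split the geodesic integral into two halves, pass to polar coordinates around one endpoint, and invoke the Jacobian comparison provided by $BG(k)$. Since Alexandrov spaces are non-branching, a measurable selection of minimal geodesics $\gamma_{xy}$ exists for almost every pair $(x,y)$ (the cut locus being null), so I may decompose
\[
\mathcal{F}_g(x,y)\ls \mathcal{F}_g^-(x,y)+\mathcal{F}_g^+(x,y), \qquad \mathcal{F}_g^-(x,y):=\int_0^{|xy|/2}g\circ\gamma_{xy}(s)\,ds,
\]
with $\mathcal{F}_g^+$ defined symmetrically on $[|xy|/2,|xy|]$. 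Since $\mathcal{F}_g^+(x,y)=\mathcal{F}_g^-(y,x)$ under reversal of $\gamma_{xy}$, it suffices to bound $\int_{A_1}\int_{A_2}\mathcal{F}_g^-(x,y)\,dy\,dx$ by $\tfrac{1}{2}c(n,R)\,r\,{\rm vol}(A_1)\int_{B_p(2R)}g\,dz$, and then add the symmetric estimate with the roles of $A_1,A_2$ swapped.

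For the inner $y$-integral with $x\in A_1$ fixed, I write $y=\exp_x(t\xi)$ with $\xi\in\Sigma_x$, $t=|xy|\ls 2r$, and radial Jacobian $A_x(t,\xi)$, so that $dy=A_x(t,\xi)\,dt\,d\xi$ and $\gamma_{xy}(s)=\exp_x(s\xi)$. Exchanging the $s$- and $t$-integrations (via $0\ls s\ls t/2\Leftrightarrow t\gs 2s$) and bounding $\mathbf{1}_{A_2}\ls 1$ gives
\[
\int_{A_2}\mathcal{F}_g^-(x,y)\,dy\ls \int_{\Sigma_x}\int_0^{r}g(\exp_x(s\xi))\Big(\int_{2s}^{2r}A_x(t,\xi)\,dt\Big)\,ds\,d\xi.
\]
The inequality stated as $BG(k)$ in the excerpt translates, via the substitution $t=2u$, into the Jacobian doubling $A_x(2u,\xi)\ls (c(n,R)/2)\,A_x(u,\xi)$ for $u\ls R$, which reduces the innermost integral to $c(n,R)\int_s^r A_x(u,\xi)\,du$.

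To close, I exchange the remaining $s$- and $u$-integrals on the triangle $\{0\ls s\ls u\ls r\}$ to obtain $c(n,R)\int_{\Sigma_x}\int_0^r A_x(u,\xi)\int_0^u g(\exp_x(s\xi))\,ds\,du\,d\xi$. The inner line integral of $g$ is converted into the surface integral $\int_{\partial B_x(s)}g\,d\sigma=\int_{\Sigma_x}g(\exp_x(s\xi))A_x(s,\xi)\,d\xi$ by a dyadic iteration of the doubling inequality to control $A_x(u,\xi)/A_x(s,\xi)$, with every resulting constant absorbed into $c(n,R)$; the $r$-factor appears from the trivial $s$-integration over $[0,r]$, and the coarea formula identifies the summed surface integrals with $\int_{B_x(r)}g\,dz$. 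Since $x\in A_1\subset B_p(r)$ gives $B_x(r)\subset B_p(2r)\subset B_p(2R)$, integrating over $x\in A_1$ and adding the symmetric bound for $\mathcal{F}_g^+$ yields the claim.

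The main obstacle here is technical rather than conceptual: I must (i) justify the polar-coordinate setup and the radial Jacobian $A_x(t,\xi)$ on an Alexandrov space, which relies on the exponential map of the excerpt, Perelman's $DC$-structure, and the fact (Otsu--Shioya) that the cut locus $C_x$ has zero $n$-Hausdorff measure, and (ii) verify that the $r$-vs-$2r$ form of $BG(k)$ propagates to the Jacobian ratios at all successive dyadic scales, with every multiplicative constant absorbable into $c(n,R)$. Granted these two points, the Fubini manipulations above are precisely those of the classical Riemannian proof.
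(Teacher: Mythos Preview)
The paper does not supply its own proof of this lemma; it is quoted from Cheeger--Colding \cite{CC-rigi} (Riemannian case) and von Renesse \cite{Ren} (non-branching $MCP$ spaces), so there is nothing to compare against except the classical argument itself.

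Your outline follows the Cheeger--Colding scheme, but the final step contains a genuine gap. After the two Fubini swaps you arrive at
\[
c(n,R)\int_{\Sigma_x}\int_0^{r}A_x(u,\xi)\int_0^{u}g\bigl(\exp_x(s\xi)\bigr)\,ds\,du\,d\xi,
\]
and you propose to convert the inner line integral into a surface integral by controlling $A_x(u,\xi)/A_x(s,\xi)$ via ``dyadic iteration of the doubling inequality''. This cannot be absorbed into $c(n,R)$: here $0<s\ls u$ with no lower bound on $s/u$, so iterating the doubling bound $\lceil\log_2(u/s)\rceil$ times produces a factor comparable to $(u/s)^{n-1}$, which diverges as $s\to 0$. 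The constant $c(n,R)$ in the statement only encodes a \emph{single} doubling of radius, not an unbounded chain.

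The source of the problem is that you paired $\mathcal{F}_g^{-}$ (the near half $s\in[0,|xy|/2]$) with polar coordinates centred at $x$. The classical proof does the opposite: integrate $\mathcal{F}_g^{+}(x,y)=\int_{|xy|/2}^{|xy|}g\circ\gamma_{xy}(s)\,ds$ in polar coordinates about $x$. Then $s\in[t/2,t]$ with $t=|xy|$, so the Jacobian ratio $A_x(t,\xi)/A_x(s,\xi)$ is bounded by $c(n,R)/2$ \emph{in one step}. The computation closes immediately:
\[
\int_{A_2}\mathcal{F}_g^{+}(x,y)\,dy\ls \frac{c(n,R)}{2}\int_{\Sigma_x}\int_0^{2r}g\bigl(\exp_x(s\xi)\bigr)A_x(s,\xi)\Bigl(\int_s^{2s}dt\Bigr)ds\,d\xi\ls c(n,R)\,r\int_{B_x(2r)}g,
\]
after which integrating over $x\in A_1$ and adding the symmetric bound (now for $\mathcal{F}_g^{-}$, handled via polar coordinates about $y$) gives the lemma. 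Your points (i) and (ii) about the Alexandrov setup are correctly identified and are handled exactly as you say (Otsu--Shioya for the null cut locus, and the $BG(k)$ monotonicity of $A_x(t,\xi)/s_k(t)^{n-1}$); only the choice of half needs to be reversed.
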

By combining this segment inequality with the following Cauchy-Schwarz inequality $$|f(y)-f(x)|^2\ls \Big(\int_0^{|xy|}|\nabla f|\circ\gamma(s)ds\Big)^2\ls |xy|\cdot\int_0^{|xy|}|\nabla f|^2\circ\gamma(s)ds,$$ one immediately gets a  (weaker) $L^2-$Poincar\'e inequality:
\be{prop}{Let $M$ be an $n-$dimensional Alexandrov space  satisfying $BG(k)$. Then for any $f\in W^{1,2}(M)$ we have
\be{equation}{\int_{B_p(R)}|f-f_B|^2\ls c(n,R)\cdot R^2\int_{B_p(2R)}|\nabla f|^2,}where $f_B=\frac{1}{{\rm vol}B_p(R)}\int_{B_p(R)}f$.
In particular, if $M$ satisfies $BG(0)$, then the constant $c(n,R)=2^{n}$.}

Furthermore, the combination of  double condition\footnote{A subset $\Omega\subset M$ is said to satisfy a double condition with double constant $D_\Omega,$ if  ${\rm vol}(B_x(2r))\ls D_\Omega\cdot{\rm vol}(B_x(r))$ for all $0<r<R$ and  $x\in \Omega$ with $B_x(2r)\subset\Omega$.} and the (weaker) Poincar\'e inequality implies  the following  Poincar\'e inequality and  Sobolev inequality.
\be{prop}{Let $M$ be an $n-$dimensional Alexandrov space  satisfying $BG(k)$. Then for any $f\in W^{1,2}(M)$ we have
\be{equation}{\int_{B_p(R)}|f-f_B|^2\ls c^*(n,R)\cdot R^2\int_{B_p(R)}|\nabla f|^2,}for a new constant $c^*(n,R),$ which is depending only on the above $c(n,R)$ and the double constant $D_{B_p(2R)}$.
}

\be{prop}{Let $M$ be an $n-$dimensional $(n\gs3)$ Alexandrov space  satisfying $BG(k)$.  Then exists a constant $c^*(n,R)$ such that for any $f\in W^{1,2}(M)\cap C_0(B_p(R))$, we have
\be{equation}{\big(\int_{B_p(R)}|f|^{2N}\big)^{1/N}\ls c^*(n,R)\cdot \frac{R^2}{({\rm vol}B_p(R))^{n/2}}\int_{B_p(R)}(|\nabla f|^2+R^{-2}\cdot f^2),} where
$N=\frac{n}{n-2}$.}
In particular, if $M$ satisfies $BG(0)$, then the constant $c^*(n,R)$ in Proposition 5.3 and 5.4 can be chosen depending only on $n$.
 We refer reader to Chapter 4 in \cite{Hei} for the proofs of Proposition 5.3 and 5.4.

We remark that Proposition  5.4 actually  holds for any $f\in W_0^{1,2}(B_p(R))$, because $DC_0(B_p(R))$ is dense in
$W_0^{1,2}(B_p(R))$ (see \cite{KMS}).

By applying Poincar\'e and Sobolev inequalities and the standard Nash-Morse iteration, one has the following mean value theorem.
\begin{prop}
Let $M$ be an $n-$dimensional Alexandrov space  satisfying $BG(k).$  Then there exists a constant
$C=C(n,R)>0$
such that for any nonnegative subharmonic function $u$, we have
\be{equation}{\sup_{B_p(R/2)} u\ls  C\cdot \frac{1}{{\rm vol}B_p(R)}\int_{B_p(R)}u.}
Moreover, if $M$ satisfies $BG(0)$, then the constant $C=C(n).$
\end{prop}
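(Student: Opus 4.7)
The plan is to carry out a standard Moser iteration, using only the Caccioppoli-type reverse Poincaré inequality that follows from subharmonicity, together with the Sobolev embedding in Proposition~5.4. Throughout, test functions of the form $\eta^{2}u^{q}$ (with $\eta$ a suitable Lipschitz cutoff) are used in the Dirichlet form $\mathcal E$; such products lie in $W^{1,2}_{0}$ on bounded subsets by standard truncation/chain-rule arguments for semi-concave approximations of $u\mapsto u^{q}$, so one may plug them into the inequality $\mathcal E(u,\phi)\leq 0$ that defines subharmonicity.

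First I would derive a Caccioppoli inequality: for $q\geq 1$ and any $\eta\in C_{0}^{0,1}(B_{p}(R))$, inserting $\phi=\eta^{2}u^{q}$ and expanding $\langle\nabla u,\nabla\phi\rangle$ yields, after absorbing a term via Cauchy--Schwarz,
\begin{equation*}
\int\eta^{2}u^{q-1}|\nabla u|^{2}\,d{\rm vol}\;\leq\;\frac{C}{(q+1)^{2}}\int|\nabla\eta|^{2}u^{q+1}\,d{\rm vol}.
\end{equation*}
Rewriting with $v:=u^{(q+1)/2}$ gives $\int\eta^{2}|\nabla v|^{2}\leq C\int|\nabla\eta|^{2}v^{2}$. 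Next I apply the Sobolev inequality of Proposition~5.4 to $\eta v\in W^{1,2}_{0}(B_{p}(R))$ to get $\|\eta v\|_{L^{2N}}^{2}$ bounded by $\|\,|\nabla(\eta v)|\,\|_{L^{2}}^{2}+R^{-2}\|\eta v\|_{L^{2}}^{2}$ up to the factor $c^{*}(n,R)\,R^{2}/{\rm vol}(B_{p}(R))^{n/2}$, where $N=n/(n-2)$. Choosing $\eta$ to equal $1$ on a ball of radius $r'<r$ and to vanish outside $B_{p}(r)$, with $|\nabla\eta|\leq 2/(r-r')$, the combination gives the reverse-H\"older step
\begin{equation*}
\Bigl(\fint_{B_{p}(r')}u^{(q+1)N}\Bigr)^{\!1/(N(q+1))}\leq\Bigl(\frac{C(n,R)}{(r-r')^{2}}\Bigr)^{\!1/(q+1)}\Bigl(\fint_{B_{p}(r)}u^{q+1}\Bigr)^{\!1/(q+1)}.
\end{equation*}

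Then I iterate. Set $q_{k}+1=2N^{k}$ and $r_{k}=\tfrac{R}{2}+\tfrac{R}{2^{k+1}}$ for $k\geq 0$. The product of the resulting constants telescopes (since $\sum N^{-k}$ and $\sum k N^{-k}$ converge), giving
\begin{equation*}
\sup_{B_{p}(R/2)}u\;\leq\;C_{1}(n,R)\Bigl(\fint_{B_{p}(R)}u^{2}\Bigr)^{\!1/2}.
\end{equation*}
To downgrade the $L^{2}$-norm to an $L^{1}$-norm I would run the same iteration between two concentric balls of radii $r<\rho\leq R$ with $(r,\rho)$ varying, obtaining $\sup_{B_{p}(r)}u\leq C(n,R)(\rho-r)^{-\alpha}\bigl(\fint_{B_{p}(\rho)}u^{2}\bigr)^{1/2}$ for some $\alpha>0$. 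Estimating crudely $\int u^{2}\leq(\sup_{B_{p}(\rho)}u)\int u$ and applying Young's inequality $\tfrac{1}{2}ab\leq\tfrac{1}{4}a^{2}+b^{2}$ yields
\begin{equation*}
\sup_{B_{p}(r)}u\;\leq\;\tfrac12\sup_{B_{p}(\rho)}u+\frac{C'(n,R)}{(\rho-r)^{2\alpha}}\fint_{B_{p}(R)}u.
\end{equation*}
A standard hole-filling lemma (absorbing via a geometric sequence of radii between $R/2$ and $R$) then removes the first term and yields the desired estimate with a constant depending only on $n$ and $R$.

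The main obstacle is, as usual in non-smooth settings, the justification that $\phi=\eta^{2}u^{q}$ is an admissible test function in the Dirichlet form inequality $\mathcal E(u,\phi)\leq 0$ and that the chain-rule identity $\nabla(u^{q})=q\,u^{q-1}\nabla u$ holds a.e.\ on $\{u>0\}$. One bypasses this by truncating $u$ at level $M$, working with $u_{M}:=\min(u,M)$ (which is still subharmonic after a convex composition is handled carefully) or replacing $u^{q}$ by a smooth bounded approximation, deriving the Caccioppoli bound uniformly in the approximation, and passing to the limit. Once this is in hand, the iteration and the $L^{2}\!\to\!L^{1}$ reduction are purely elementary. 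The refined assertion under $BG(0)$ then follows because, as noted after Proposition~5.4, the constants in the Sobolev and weak Poincar\'e inequalities depend only on $n$ in that case, and the iteration preserves this dependence.
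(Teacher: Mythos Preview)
Your proposal is correct and follows exactly the approach the paper indicates: the paper does not write out a proof but simply states that the result follows ``by applying Poincar\'e and Sobolev inequalities and the standard Nash--Morse iteration,'' and your argument is precisely a detailed implementation of that Moser iteration using Proposition~5.4, including the standard $L^{2}\to L^{1}$ reduction. Your remark about the admissibility of $\eta^{2}u^{q}$ as a test function is the only genuinely delicate point in this non-smooth setting, and the truncation scheme you outline is the usual way to handle it.
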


In \cite{KMS}, K. Kuwae, Y. Machigashira and T. Shioya  proved that  the induced distance by a canonical Dirichlet form is equivalent to original one for Alexandrov spaces (see Theorem 7.1 in \cite{KMS}).  By combining Sturm's work on strongly local regular Dirichlet form in \cite{S-diff,S-diri}, they obtained that  the existence of the heat kernel, H\"older continuity and the parabolic Harnack inequality for the solutions of the heat equation on Alexandrov spaces. Furthermore, they proved,
\be{prop}{$($Theorem 1.5 in \cite{KMS}$)$\indent Let $M$ be an $n-$dimensional Alexandrov space  satisfying $BG(k)$.
 For any bounded open set $\Omega\subset M$. Denote by $\lambda_1\ls \lambda_2 \ls\cdots$ all the eigenvalues of $\Delta^\Omega$ with multiplicity and by $\{u_i\}_{i=1}^\infty$ the sequence of  associated eigenfunctions which is a complete orthonormal basis of $L^2(\Omega)$. Let $p^\Omega(t,x,y)$ be the heat kernel on $\Omega$. Then we have \be{equation}{p^\Omega(t,x,y)=\sum_{i=1}^\infty e^{-\lambda_it}u_i(x)u_i(y)} for any $t>0$ and $x,y\in \Omega$, where the convergence is uniform on any compact subset of $(0,\infty)\times\Omega\times\Omega.$
}

In \cite{KMS}, it was  proved that the  first eigenvalue on a
bounded open set of an  Alexandrov space  is positive.  For higher
eigenvalues, one has the following lower bound estimates.
\be{prop}{Let $M$ be an $n-$dimensional Alexandrov space satisfying
$BG(k)$. Given constants $D$, there exists $C=C(n,k,D)$ such that
the $j-$th eigenvalue satisfies $$\lambda_j(\Omega)\gs C\cdot
j^{2/n},\qquad j\gs1$$ for all open sets $\Omega $ with ${\rm
diam}(\Omega)\ls D$. }Its proof follows exactly as smooth case by
using  Bishop-Gromov volume comparison and  Sobolev inequality (see,
for example, in \cite{SY}).

On the other hand, the standard proof for the H\"older continuity of harmonic functions  implies a stronger Liouville property: \be{prop}{$($see Corollary 3.4 in \cite{S-diri}$)$\indent Let $M$ be an $n-$dimensional Alexandrov space  satisfying $BG(0)$, then all of positive harmonic functions on $M$ are constants.}

Let $M^n$ be an $n-$dimensional Riemannian manifold with nonnegative Ricci curvature, Yau in \cite{Y} conjectured that the space of harmonic functions on $M^n$ with at most polynomial growth of degree $d$ must be finite dimension for any $d\in \R^+$. The conjecture was proved by Colding and Minicozzi in \cite{CM1}. In fact, they proved a more general statement which only assumed that $M^n$ admits a doubling property and a Poincar\'e inequality. Later in \cite{Li}, Peter Li gave a short proof under weaker conditions that the manifold $M^n$ admits a doubling property and a mean value inequality.

Recently, Hua in \cite{Hua} extended Colding-Minicozzi's argument in \cite{CM2} to prove the following result.
\be{prop}{$($Hua \cite{Hua}$)$\indent Let $M$ be an $n-$dimensional  Alexandrov space with nonnegative curvature. $\partial M=\varnothing.$ Then the space of
harmonic functions with polynomial growth of degree $\ls d$  is finite dimensional for any $d\in\R^+$.}
After obtaining Proposition 5.3, one can actually prove the above proposition under the weaker condition  $BG(0)$ (replacing the nonnegative curvature condition).

In the end of the subsection, let us consider the Gaussian estimates for heat kernel under condition $BG(k)$. Let $M$ be an $n-$dimensional Alexandrov space satisfying $BG(k)$ and let $\Omega$ be an open set $\Omega\subset M$. If $k<0$ we add the assumption that $\Omega$ is bounded.

Let $C_D$ and $C_P$ be the double constant and Poincar\'e constant (Proposition 5.2) in $\Omega$. If $M$ satisfies $BG(0)$, both constants depend only on the dimension of $M$. In general, they depend also on  the diameter of $\Omega.$

By combining Theorem 7.1, 8.3 in \cite{KMS} and Theorem 4.1, 4.8 in \cite{S-diri}, we have the following Gaussian type bounds for heat kernel on $\Omega$:
\begin{thm}$($\cite{KMS,S-diri}$)$ There exists a constant $C$ depending only on $C_D$ and $C_P$ such that the following estimates hold true
\be{equation} {p(t,x,y)\ls C\cdot \big({\rm vol}B_x(\sqrt t)\cdot {\rm vol}B_y(\sqrt t)\big)^{-1/2}\cdot\exp\big(-\frac{|xy|^2}{5t}\big)} for all
$x,y\in \Omega$ and $\sqrt t<\min\{{\rm dist}(x,\partial \Omega),{\rm dist}(x,\partial \Omega)\},$ and
\be{equation} {p(t,x,y)\gs C^{-1}\cdot \big({\rm vol}B_x(\sqrt t)\big)^{-1}\cdot\exp\big(-\frac{C|xy|^2}{t}-\frac{Ct}{R^2}\big)} for all
$x,y\in \Omega$ which are joined in $\Omega$ by a curve $\gamma\subset\Omega$. Here $\sqrt t<R^2$ with $R={\rm dist}(\gamma,\partial \Omega)$.
\end{thm}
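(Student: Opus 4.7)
The plan is to verify that the three structural ingredients required by Sturm's general Gaussian estimate machinery (as developed in \cite{S-diri}) are available on our Alexandrov space $M$ under the hypothesis $BG(k)$, and then simply invoke that machinery. In outline, Sturm's theorem says: for a strongly local, regular Dirichlet form whose intrinsic pseudo-metric is equivalent to a locally compact geodesic distance, the combination of a local volume doubling property and a scale-invariant weak $L^2$-Poincar\'e inequality is equivalent to the parabolic Harnack inequality, which in turn is equivalent to the two-sided Gaussian bounds of the stated form.

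First, I would recall from Theorem 7.1 of \cite{KMS} that the canonical Dirichlet form $\mathcal{E}$ introduced in Section 3 is strongly local and regular on $(M,{\rm vol})$, and that the intrinsic distance coming from $\mathcal{E}$ coincides with the original geodesic distance on $M$. This identification is what makes it possible to convert analytic bounds on the heat semigroup into metric Gaussian bounds in terms of $|xy|$ and volumes of balls $B_x(\sqrt t)$. Second, I would note that the assumption $BG(k)$ implies Bishop-Gromov relative volume comparison, and hence volume doubling on every bounded subset with a doubling constant $C_D$ depending only on $n$ (and on the diameter when $k<0$, which is why the statement requires $\Omega$ bounded in that case); for $BG(0)$, $C_D$ depends only on $n$. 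Third, the scale-invariant weak $L^2$-Poincar\'e inequality on balls is exactly Proposition 5.2 (combined with Proposition 5.3), again with constant $C_P$ depending only on $C_D$ (hence only on $n$ in the $BG(0)$ case, and on $n$ together with the diameter of $\Omega$ in general).

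Having verified these three ingredients, I would apply Theorem~4.1 of \cite{S-diri}, which yields the upper Gaussian bound
\[ p(t,x,y)\ls C\cdot\big({\rm vol}B_x(\sqrt t)\cdot{\rm vol}B_y(\sqrt t)\big)^{-1/2}\cdot\exp\big(-|xy|^2/5t\big) \]
on pairs $(x,y)$ with $\sqrt t$ less than the distance to $\partial\Omega$, with $C$ depending only on $C_D$ and $C_P$. The lower Gaussian bound is then obtained from Theorem~4.8 of \cite{S-diri}, which uses the parabolic Harnack inequality together with the on-diagonal lower bound (itself a consequence of the doubling and Poincar\'e assumptions) and a standard chaining argument along a curve $\gamma\subset\Omega$; this produces the correct factor $\exp(-C|xy|^2/t-Ct/R^2)$ with $R={\rm dist}(\gamma,\partial\Omega)$.

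The main conceptual obstacle here is not in the Gaussian bounds themselves, which are purely functional-analytic once doubling and Poincar\'e are in place, but in ensuring the compatibility step: that the Dirichlet-form intrinsic metric matches the Alexandrov distance, so that the abstract bounds translate into geometrically meaningful estimates involving $|xy|$ and volumes of geodesic balls. This is exactly the content of Theorem~7.1 of \cite{KMS}, and citing it is what allows the plan to collapse into a bookkeeping of constants. Beyond that, the only care needed is to track when $C$ is a pure dimensional constant ($BG(0)$ case) versus when it depends additionally on ${\rm diam}(\Omega)$ through Bishop-Gromov ($BG(k)$ with $k<0$).
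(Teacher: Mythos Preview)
Your proposal is correct and matches the paper's own treatment exactly: the paper does not give an independent proof but simply states that the result follows by combining Theorem~7.1 and Theorem~8.3 of \cite{KMS} (identification of the intrinsic metric with the Alexandrov distance, and the Poincar\'e inequality) with Theorems~4.1 and~4.8 of \cite{S-diri} (Sturm's Gaussian bounds from doubling plus Poincar\'e). Your write-up is essentially an unpacking of precisely that citation chain, with the same division of labor among the references.
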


\subsection{Lipschitz continuity for  heat kernel}
Petrunin in \cite{Pet-har} sketched a proof to the Lipschitz continuity of harmonic functions on Alexandrov spaces.
\begin{thm}$($Petrunin \cite{Pet-har}$)$\indent
Let $\Omega\subset M$ be an open domain in an $n-$dimensional Alexandrov space $M$ with curvature $\gs \ka$ on $\Omega$.  Then, for any compact subset $K\subset\Omega$ with ${\rm diam}(K)\ls D,\ {\rm vol}(K)\gs v$ and ${\rm dist}(K,\partial\Omega)\ge\rho$, there exists a positive constant $L=L(n,\ka,D,v,\rho)$ such that
 \be{equation}{|\nabla f|_{L^\infty(K)}\ls L\cdot\|f\|_{W^{1,2}(\Omega)},}
for all harmonic functions $f:\Omega\to \R$. \end{thm}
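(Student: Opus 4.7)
The plan is to follow the classical Bochner-plus-Moser-iteration route, adapted to Alexandrov spaces via Petrunin's second variation formula. Concretely: (i) establish that for harmonic $f$, the function $|\nabla f|^{2}$ is ``$2(n-1)\ka$-subharmonic'' in the distributional sense; (ii) apply the mean value inequality of Proposition 5.5 to the nonnegative subharmonic function $h=e^{-2(n-1)\ka^{-}t}|\nabla f|^{2}$ (or $h=|\nabla f|^{2}$ when $\ka\gs 0$) on balls of radius comparable to $\rho$; (iii) dominate the resulting $L^{1}$-average by $\|f\|^{2}_{W^{1,2}(\Omega)}$ and cover $K$ by finitely many such balls.

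The main obstacle is step (i): proving
\[
\tfrac{1}{2}\Delta|\nabla f|^{2}\;\gs\;(n-1)\ka\,|\nabla f|^{2}
\]
in the distributional sense for a harmonic $f$ on an Alexandrov space of curvature $\gs\ka$. On a smooth manifold this follows at once from the Bochner identity together with $\Delta f=0$ and $Ric\gs (n-1)\ka$, but in the Alexandrov setting $f$ need not be $C^{1}$ and $\mathrm{Hess}\,f$ is not pointwise defined. Following Petrunin, I would use Proposition 2.4 as the sole infinitesimal input: applied to the almost-everywhere defined gradient flow $F_{t}(x)=\exp_{x}(t\nabla f(x))$, it yields a quadratic-in-$t$ upper bound, with curvature correction $\ka$, for $|F_{t}(x)F_{t}(y)|$. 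Testing against a smooth cutoff $\phi\gs 0$, using $\Delta f=0$ to cancel the linear-in-Hessian terms and integrating by parts yields the desired weak subharmonicity; the delicate point is that all the manipulations must take place on the full-measure regular set $\mathrm{Reg}_{f}$, and singular contributions have to be controlled via the Cheeger-Colding segment inequality (Lemma~5.1).

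Granting the Bochner inequality, set $\delta=\rho/4$ and cover $K$ by finitely many balls $B_{x_{i}}(\delta)$ with $x_{i}\in K$, whose number is bounded in terms of $n,\ka,D,\rho$ by Bishop-Gromov. Since curvature $\gs\ka$ in $\Omega$ gives $BG((n-1)\ka)$ on the balls $B_{x_{i}}(2\delta)\subset\Omega$, Proposition 5.5 applied to $h$ yields
\[
\sup_{B_{x_{i}}(\delta)}|\nabla f|^{2}\;\ls\; C(n,\ka,\rho)\cdot\frac{1}{\mathrm{vol}(B_{x_{i}}(2\delta))}\int_{B_{x_{i}}(2\delta)}|\nabla f|^{2}.
\]
Bishop-Gromov together with $\mathrm{diam}(K)\ls D$ and $\mathrm{vol}(K)\gs v$ furnishes a uniform lower bound $\mathrm{vol}(B_{x_{i}}(2\delta))\gs c(n,\ka,D,v,\rho)>0$, while $\int_{B_{x_{i}}(2\delta)}|\nabla f|^{2}\ls \|f\|_{W^{1,2}(\Omega)}^{2}$ trivially. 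Taking the maximum over the finite cover produces the desired estimate with $L=L(n,\ka,D,v,\rho)$.
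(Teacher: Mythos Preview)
The paper does not supply a proof of this theorem; it is quoted as Petrunin's result from \cite{Pet-har}, where only a sketch appears. So there is no ``paper's own proof'' to compare against directly. Your overall architecture---a Bochner-type subharmonicity of $|\nabla f|^{2}$, then the mean-value inequality of Proposition~5.5, then a covering argument with Bishop--Gromov---is the natural one, and your steps (ii)--(iii) are routine once (i) is available.

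The gap is exactly your step (i), and it is the entire content of the theorem. What you write there is a heuristic, not an argument. Proposition~2.4 yields a second-variation inequality for distances between points displaced \emph{perpendicularly} off a \emph{fixed} geodesic via an isometry $T:L_{q_1}\to L_{q_2}$ chosen only along a subsequence $\widetilde\varepsilon_j$; it says nothing directly about the map $F_t(x)=\exp_x(t\nabla f(x))$, where $\nabla f$ is a priori merely an $L^2$ vector field, is not perpendicular to any distinguished geodesic, and where you have no control over $T$. Turning such a pairwise, subsequential distance estimate into the distributional inequality $\tfrac12\Delta|\nabla f|^2\gs (n-1)\ka|\nabla f|^2$ requires a substantial construction that you have not supplied---and the authors themselves, in their discussion of the splitting theorem, explicitly write that ``one does not expect the Bochner formula holds on Alexandrov spaces,'' so within the framework of this paper the inequality cannot be assumed. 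Petrunin's sketch in \cite{Pet-har} does target a subharmonicity property of $|\nabla f|$, but through a lifting of $f$ to the tangent cone and Perelman's concave-function construction rather than the gradient-flow picture you describe; filling in those details is known to be delicate and was not completed in the literature at the time of this paper.
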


A similar regularity  problem  for harmonic maps between singular
spaces was also studied. Korevaar-Schoen \cite{KoS} proved that a
harmonic map from a smooth Riemannian manifold to a non-positive
curved space (in sense of Alexandrov) is locally Lipschitz. Lin
\cite{Lin} proved that a harmonic map from an Alexandrov space  to a
non-positive curved  space (in sense of Alexandrov) is H\"older
continuous. However, the following question arisen by F.H.Lin in
\cite{Lin} is still open.\begin{conj}$(Lin$ \cite{Lin}$)$\indent A
harmonic map from an Alexandrov space  to a non-positive curved
space (in sense of Alexandrov) is locally Lipschitz.\end{conj}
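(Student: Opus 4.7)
The natural strategy is to combine Petrunin's coupling and maximum-principle argument behind Theorem 5.11 with the CAT(0) structure of the target. Given a harmonic map $u:\Omega\to N$ with $N$ non-positively curved in the Alexandrov sense, I would consider
\be{equation*}{\Phi(x,y):=d_N^2\bigl(u(x),u(y)\bigr)}
on the product space $\Omega\times\Omega\subset M\times M$. Because $d_N^2$ is $2$-convex along geodesics of the CAT(0) target, and $u$ is harmonic in the Korevaar-Schoen sense, one expects $\Phi$ to be weakly subharmonic with respect to the product Dirichlet form, in direct analogy with Jost's coupled argument on smooth domains. The product $M\times M$ is itself an Alexandrov space whose lower curvature bound is inherited from that of $M$, so the Laplacian comparison of Theorem 3.4 is available for the squared distance to the diagonal $d_M^2(x,y)$ on $M\times M$.

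The plan is to run a doubling-variable argument. On a compact set $K\Subset\Omega$, I would test the barrier
\be{equation*}{\Psi(x,y)=\Phi(x,y)-C\cdot d_M^2(x,y)-A\cdot\chi(x,y),}
where $\chi$ is a semi-concave cut-off that vanishes near $K\times K$ and grows near the topological boundary of $\Omega\times\Omega$, and $A,C$ are to be chosen. Since $\Phi\equiv 0$ on the diagonal, for $C$ strictly larger than the Lipschitz constant we aim for, the supremum of $\Psi$ on $\Omega\times\Omega$ must be attained at some off-diagonal point $(x_\ast,y_\ast)$. At that point the maximum principle, combined with the subharmonicity of $\Phi$ and the pointwise upper bound on $\Delta d_M^2$ coming from Theorem 3.4 applied to $M\times M$, should force a contradiction whenever $C$ exceeds a constant $L=L(n,\ka,v,\rho,D)$ depending only on the geometry of $K$ and $\Omega$. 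This would yield $d_N(u(x),u(y))\ls \sqrt{L}\cdot d_M(x,y)$ on $K$, and local Lipschitz continuity would follow by a covering argument.

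The main obstacle is making the subharmonicity of $\Phi$ on $\Omega\times\Omega$ rigorous: the only regularity for $u$ from Lin \cite{Lin} is H\"older continuity, and the CAT(0) condition provides only a metric-space convexity of $d_N^2$, not a pointwise Bochner inequality. One must work at the level of Korevaar-Schoen energy on the product, verify that $\Phi$ is a weak supersolution of $-\Delta^{M\times M}$, and then localize the comparison pointwise along geodesics of $M\times M$ joining $(x_\ast,y_\ast)$ to the diagonal, using the segment inequality (Lemma 5.1) to reach a full-measure subset on which Proposition 3.3 and Theorem 3.4 apply classically. A secondary difficulty is that $d_M^2$ on the product is only semi-concave on a full-measure set, so the singular part of $\Delta d_M^2$ (non-positive, as recalled in Remark 3.5) must still be accounted for; the rigidity machinery of Proposition 3.6 appears to be the natural tool to absorb the resulting error terms, and it is precisely this step — transferring a \emph{weak} subharmonicity on a non-smooth product to a \emph{pointwise} comparison at an \emph{a priori} unknown off-diagonal maximum — that I expect to be the crux of the conjecture.
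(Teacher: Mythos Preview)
The statement you are addressing is Conjecture 5.12, which the paper explicitly records as \emph{open}; there is no proof in the paper to compare against. So the relevant question is not whether your argument matches the paper's, but whether it actually closes the conjecture.

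It does not, and you have essentially said so yourself. Your outline is a reasonable heuristic --- a doubling-variable/Ishii--Lions scheme transplanted to Alexandrov domains with CAT(0) targets --- but the two places you flag as ``obstacles'' are exactly the missing ideas, not technicalities. First, the claim that $\Phi(x,y)=d_N^2(u(x),u(y))$ is weakly subharmonic on the product requires a chain-rule/Bochner-type inequality for Korevaar--Schoen harmonic maps from a \emph{singular} domain; Jost's argument uses smoothness of the domain in an essential way, and no substitute is known here. Second, even granting weak subharmonicity, your maximum-principle step needs a \emph{pointwise} second-order comparison at an a priori unknown point $(x_\ast,y_\ast)$. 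Theorem 3.4 and Proposition 3.3 give information only along geodesics and only up to passing to subsequences of scales; they do not furnish a two-sided viscosity-type touching condition at an arbitrary point of $M\times M$, and Proposition 3.6 is a rigidity statement under an equality hypothesis, not a tool for absorbing error terms in a generic comparison. Invoking Lemma 5.1 to reach a full-measure set does not help, since the maximum point need not lie in that set and you have no continuity of the relevant second-order quantities to perturb onto it.

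In short: your proposal is a plausible plan of attack, but the steps you label as the ``crux'' are genuine gaps that the paper does not fill either --- which is precisely why the statement is recorded as a conjecture.
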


As an consequence of Petrunin's estimate for harmonic functions, we
can obtain an interior estimate for gradient of eigenfunctions as
follows.
\begin{prop}
Let $\Omega\subset M$ be an open domain in an $n-$dimensional Alexandrov space $M$ with curvature $\gs \ka$ on $\Omega$. Let
$f$ be an eigenfunction on $\Omega$ with respect to eigenvalue of the Laplacian $\lambda$ and $\|f\|_2=1.$
 Then for any compact subset $K\subset\Omega$ with ${\rm diam}(K)\ls D,\ {\rm vol}(K)\gs v$ and ${\rm dist}(K,\partial\Omega)\ge\rho$ we have the estimate
\be{equation}{|\nabla f|_{L^\infty(K)}\ls L\cdot\sqrt{\frac{2\lambda+1}{2\sqrt\lambda}}\cdot e^{\sqrt\lambda\cdot\rho}}
and \be{equation}{|f(x)|\ls v^{-1/2}+DL\cdot\sqrt{\frac{2\lambda+1}{2\sqrt\lambda}}\cdot e^{\sqrt\lambda\cdot\rho}.
}
where constant $L=L(n,\ka,D,v,\rho)$. \end{prop}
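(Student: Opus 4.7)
The strategy is to reduce the eigenfunction problem to one about harmonic functions on the product Alexandrov space $M\times\mathbb{R}$, and then apply Petrunin's gradient estimate (Theorem 5.11) directly.

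\medskip

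\noindent\textbf{Step 1: Pass to the product space.} Form $\tilde M:=M\times\mathbb{R}$, which by Example (3) is an $(n{+}1)$-dimensional Alexandrov space; since $M$ has curvature $\gs\ka$ on $\Omega$ and $\mathbb{R}$ has curvature $\gs 0$, the product has curvature $\gs\ka':=\min(\ka,0)$ on $\tilde\Omega:=\Omega\times(-\rho,\rho)$. Define
\[
F(x,t):=f(x)\cdot e^{\sqrt\lambda\,t}\quad\text{on }\tilde\Omega.
\]
Because the canonical Dirichlet form on the product splits as $\int|\nabla_x u|^2 + \int|\partial_tu|^2$, and because $\partial_t^2 e^{\sqrt\lambda t}=\lambda e^{\sqrt\lambda t}$ cancels $\Delta f=-\lambda f$, the function $F$ is harmonic on $\tilde\Omega$.

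\medskip

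\noindent\textbf{Step 2: Bound the $W^{1,2}$--norm of $F$.} Using $\|f\|_2=1$ and, by integration by parts (valid since $f\in W^{1,2}_0(\Omega)$), $\int_\Omega|\nabla f|^2=\lambda$, Fubini gives
\[
\int_{\tilde\Omega}F^2=\int_{-\rho}^{\rho}e^{2\sqrt\lambda t}dt\ls\frac{e^{2\sqrt\lambda\rho}}{2\sqrt\lambda},\qquad
\int_{\tilde\Omega}|\nabla F|^2=2\lambda\int_{-\rho}^{\rho}e^{2\sqrt\lambda t}dt\ls\sqrt\lambda\cdot e^{2\sqrt\lambda\rho},
\]
so $\|F\|_{W^{1,2}(\tilde\Omega)}\ls e^{\sqrt\lambda\rho}\cdot\sqrt{(2\lambda+1)/(2\sqrt\lambda)}$, which is precisely the factor appearing in (5.13).

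\medskip

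\noindent\textbf{Step 3: Apply Petrunin's estimate.} Choose the compact slab $\tilde K:=K\times[-\rho/2,\rho/2]\subset\tilde\Omega$. Then
\[
\mathrm{diam}(\tilde K)\ls\sqrt{D^2+\rho^2},\quad \mathrm{vol}(\tilde K)\gs v\rho,\quad \mathrm{dist}(\tilde K,\partial\tilde\Omega)\gs\rho/2,
\]
so Theorem 5.11 applied to $F$ on $\tilde\Omega$ produces a constant $\tilde L=\tilde L(n{+}1,\ka',\sqrt{D^2+\rho^2},v\rho,\rho/2)$, i.e.\ a constant $L=L(n,\ka,D,v,\rho)$, with
\[
|\nabla F|_{L^\infty(\tilde K)}\ls L\cdot\|F\|_{W^{1,2}(\tilde\Omega)}.
\]
On $K\times\{0\}$ one has $|\nabla F|^2=|\nabla f|^2+\lambda f^2\gs|\nabla f|^2$, and substituting the bound from Step 2 yields the first inequality (5.13).

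\medskip

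\noindent\textbf{Step 4: From gradient to pointwise.} Since $\int_K f^2\ls 1$ and $\mathrm{vol}(K)\gs v$, the average of $f^2$ on $K$ is $\ls 1/v$, so there is some $x_0\in K$ with $|f(x_0)|\ls v^{-1/2}$. For any other $x\in K$, the gradient estimate (5.13) combined with the Lipschitz continuity of $f$ along a geodesic joining $x_0$ to $x$ (which lies in $K$ by the implicit convexity/quasi-convexity assumption on $K$) gives $|f(x)-f(x_0)|\ls\mathrm{diam}(K)\cdot|\nabla f|_{L^\infty(K)}\ls D\cdot L\cdot\sqrt{(2\lambda+1)/(2\sqrt\lambda)}\cdot e^{\sqrt\lambda\rho}$, which is (5.14).

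\medskip

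The main subtlety to verify carefully is that $F$ is genuinely harmonic with respect to the canonical Dirichlet form on $\tilde M$ (so that Theorem 5.11 applies verbatim): this reduces to the splitting of the Dirichlet form on products, which is standard for the Kuwae--Machigashira--Shioya framework used in Section 5.1. A minor issue is ensuring that the Lipschitz interpolation in Step 4 is legitimate along geodesics staying inside the region of gradient control, which in the worst case is handled by a slight enlargement of $K$ inside $\Omega$ before invoking Theorem 5.11.
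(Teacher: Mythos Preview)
Your proof is correct and follows essentially the same route as the paper: both lift $f$ to the harmonic function $e^{\sqrt\lambda\,t}f(x)$ on a product with an interval, apply Petrunin's Lipschitz estimate (Theorem~5.11) there, and deduce the pointwise bound from the gradient bound via a point where $|f|\ls v^{-1/2}$. The only cosmetic difference is the choice of $t$-interval (the paper takes $(0,D+2\rho)$ with $K_1=K\times[\rho,D+\rho]$, you take $(-\rho,\rho)$ with $\tilde K=K\times[-\rho/2,\rho/2]$), which just changes the explicit dependence of the constant $L$; you also flag the geodesic-convexity issue in Step~4 more carefully than the paper does.
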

\be{proof}
{Consider function $w(x,t)=e^{\sqrt\lambda t}\cdot f(x)$ in $\Omega\times (0,D+2\rho)$. Clearly it is  harmonic.
 By setting  $K_1=K\times[\rho,D+\rho]$ and $I=(0,D+2\rho)$, we have
 \be{equation*}{|\nabla w|_{L^\infty(K_1)}\ls L(\sqrt2D,Dv,n,k\wedge0,\rho)\cdot\|w\|_{W^{1,2}(\Omega\times I)}.}
Noted that $$|\nabla w|_{L^\infty(K_1)}\gs e^{\sqrt\lambda(D+\rho)}\cdot|\nabla f|_{L^\infty(K)}$$
and $$\|w\|_{W^{1,2}(\Omega\times I)}^2=(2\lambda+1)\int_Ie^{2\sqrt \lambda t}dt\ls  e^{2\sqrt\lambda (D+2\rho)}\cdot\frac{2\lambda+1}{2\sqrt\lambda}.$$
Then the desired estimate (5.10) holds. By the assumption that
${\rm vol}K\gs v$, we get $$\min_Kf^2\cdot v\ls \int_Kf^2\ls \int_\Omega f^2=1.$$
So $\min_Kf\ls 1/\sqrt{v}.$
Thus the desired estimate (5.11) follows from this and the gradient estimate  (5.10).}

Now by combining  Proposition 5.6 and the above estimates for eigenvalues and eigenfunctions above, we can prove that the heat kernel is  locally Lipschitz continuous.

\begin{thm}Let $\Omega\subset M$ be an open domain in an $n-$dimensional Alexandrov space $M$ with curvature $\gs \ka$ on $\Omega$. Let
$p(t,x,y)$ be the heat kernel on $\Omega$.
 Then for any compact subset $K\subset\Omega$ with ${\rm diam}(K)\ls D,\ {\rm vol}(K)\gs v$ and ${\rm dist}(K,\partial\Omega)\ge\rho$ we have the estimate
\be{equation}{|\nabla_y p(t,x,y)|_{L^\infty([\rho^2,+\infty)\times K\times K)}\ls c(n,\ka,D,v,\rho)}
for some constant $ c(n,\ka,D,v,\rho)$. \end{thm}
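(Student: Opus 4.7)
The plan is to combine the spectral expansion of $p(t,x,y)$ from Proposition 5.6 with the pointwise eigenfunction estimates of Proposition 5.13, and then control the resulting series via the Weyl-type eigenvalue growth from Proposition 5.7.

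Writing $p(t,x,y) = \sum_{i\gs 1} e^{-\lambda_i t} u_i(x) u_i(y)$ with $\{u_i\}$ an $L^2(\Omega)$-orthonormal basis of Dirichlet eigenfunctions, I first observe that for fixed $(t,x) \in [\rho^2,\infty) \times K$, each partial sum $p_N(t,x,\cdot)$ is Lipschitz on $K$ with Lipschitz constant at most
\[
\sum_{i=1}^N e^{-\lambda_i t}\, |u_i|_{L^\infty(K)} \cdot |\nabla u_i|_{L^\infty(K)}.
\]
Applying estimates (5.10) and (5.11) of Proposition 5.13 to each $u_i$ (recalling $\|u_i\|_2=1$) and using that $\sqrt{(2\lambda+1)/(2\sqrt\lambda)} \sim \lambda^{1/4}$ as $\lambda\to\infty$, both factors are bounded by $C\lambda_i^{1/4} e^{\sqrt{\lambda_i}\rho}$ for some $C = C(n,\ka,D,v,\rho)$. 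Hence the $i$-th term is dominated by $C^2 \lambda_i^{1/2} e^{-\lambda_i t + 2\sqrt{\lambda_i}\rho}$. Completing the square in the exponent gives $-\lambda_i t + 2\sqrt{\lambda_i}\rho \ls \rho^2/t - t(\sqrt{\lambda_i}-\rho/t)^2$, which for $t \gs \rho^2$ is uniformly bounded above and, for $\lambda_i$ large enough that $\sqrt{\lambda_i} \gs 2\rho/t$, decays like $-\lambda_i t/4$. Combined with the Weyl-type lower bound $\lambda_i \gs C_0 \cdot i^{2/n}$ from Proposition 5.7, the full series $\sum_i \lambda_i^{1/2} e^{-\lambda_i t + 2\sqrt{\lambda_i}\rho}$ converges, uniformly in $(t,x,y) \in [\rho^2,\infty) \times K \times K$.

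The hard part will be translating this uniform Lipschitz bound on the partial sums into the pointwise gradient estimate (5.12) for $p$ itself. Here the plan is to invoke Proposition 5.6 for the uniform convergence $p_N \to p$ on compact subsets of $(0,\infty) \times \Omega \times \Omega$, so that $p(t,x,\cdot)$ is a uniform limit on $K$ of Lipschitz functions with a common Lipschitz constant, and hence inherits that constant. Since on an $n$-dimensional Alexandrov space the pointwise Lipschitz constant of a Lipschitz function coincides almost everywhere with $|\nabla\cdot|$, this yields the $L^\infty$-bound (5.12) with $c(n,\ka,D,v,\rho)$ realized as $C^2 \sum_i \lambda_i^{1/2} e^{-\lambda_i \rho^2 + 2\sqrt{\lambda_i}\rho}$.
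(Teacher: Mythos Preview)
Your proposal is correct and follows essentially the same route as the paper: combine the spectral expansion (Proposition~5.6), the pointwise eigenfunction and gradient bounds (Proposition~5.13), and the Weyl-type eigenvalue growth (Proposition~5.7) to show the series $\sum_j e^{-\lambda_j t}|u_j(x)|\,|\nabla u_j(y)|$ converges uniformly on $[\rho^2,\infty)\times K\times K$. The only cosmetic differences are that the paper differentiates term-by-term and splits the sum into a finite set $N_1=\{j:\sqrt{\lambda_j}\ \text{small}\}$ and its complement (bounding the tail by $\sum_j e^{-\lambda_j\rho^2/2}\sqrt{\lambda_j}\ls C\sum_j j^{-(4-1/n)}$), whereas you complete the square in the exponent and pass the Lipschitz bound through the uniform limit---your handling of the limit is in fact slightly more careful than the paper's implicit term-by-term differentiation.
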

\begin{proof}
By Proposition 5.6, 5.7 and 5.13,  we get\be{equation*}{\be{split}{
|\nabla_y&p(t,x,y)|\\ &\ls \sum_{j=1}^\infty e^{-\lambda_j\cdot\rho^2}\cdot\Big(v^{-1/2}+DL\cdot\sqrt{\frac{2\lambda_j+1}{2\sqrt{\lambda_j}}}\cdot e^{\sqrt\lambda_j\cdot\rho}\Big)\cdot\Big(L\cdot\sqrt{\frac{2\lambda_j+1}{2\sqrt{\lambda_j}}}\cdot e^{\sqrt{\lambda_j}\cdot\rho}\Big)}
} for all $(t,x,y)\in[\rho^2,+\infty)\times K\times K.$

Denote by $$N_1=\Big\{j\in\mathbb N:\ \sqrt{\lambda_j}\ls \max\{1,\ 4/\rho,\ -\rho^{-1}\cdot\ln(DLv^{1/2})\}\Big\}.$$
By Proposition 5.7, we have $$\# N_1\ls j_0(n,\ka,D,v,\rho),$$ and hence
\begin{equation*}\begin{split}I:&=\sum_{j\in N_1} e^{-\lambda_j\cdot\rho^2}\cdot\Big(v^{-1/2}+DL\cdot\sqrt{\frac{2\lambda_j+1}{2\sqrt{\lambda_j}}}\cdot e^{\sqrt\lambda_j\cdot\rho}\Big)\cdot\Big(L\cdot\sqrt{\frac{2\lambda_j+1}{2\sqrt{\lambda_j}}}\cdot e^{\sqrt{\lambda_j}\cdot\rho}\Big)\\&\ls c_1(n,\ka,D,v,\rho)\end{split} \end{equation*}
for some constant $c_1(n,\ka,D,v,\rho).$
By the definition of the set $N_1$, we get
\begin{equation*}\begin{split}II:&=\sum_{j\not\in N_1} e^{-\lambda_j\cdot\rho^2}\cdot\Big(v^{-1/2}+DL\cdot\sqrt{\frac{2\lambda_j+1}{2\sqrt{\lambda_j}}}\cdot e^{\sqrt\lambda_j\cdot\rho}\Big)\cdot\Big(L\cdot\sqrt{\frac{2\lambda_j+1}{2\sqrt{\lambda_j}}}\cdot e^{\sqrt{\lambda_j}\cdot\rho}\Big)\\&\ls
\sum_{j\not\in N_1} e^{-\lambda_j\cdot \rho^2}\cdot3DL^2\cdot\sqrt{\lambda_j}\cdot e^{2\sqrt{\lambda_j}\rho}\\&\ls 3DL^2\sum_{j\not\in N_1}e^{-\lambda_j\cdot \rho^2/2}\cdot\sqrt{\lambda_j}.
\end{split} \end{equation*}
By applying Proposition 5.7 again,  we have  $$\sqrt{\lambda_j}\cdot e^{-\lambda_j\cdot\rho^2/2}\ls \sqrt{\lambda_j}\cdot(2n)!\cdot(\lambda_j\cdot\rho^2/2)^{-2n}\ls c_2(n,\ka,D,\rho)\cdot j^{-(4-1/n)}$$ for all $j\in\mathbb N.$
Thus, we have $$II\ls c_3(n,\ka,D,v,\rho).$$
Therefore, the proof is completed.\end{proof}

\end{document}